\newtheorem{theorem}{Theorem}[section]
\newtheorem{lemma}[theorem]{Lemma}
\newtheorem{definition}[theorem]{Definition}
\newtheorem{remark}[theorem]{Remark}
\newtheorem{proposition}[theorem]{Proposition}
\newtheorem{corollary}[theorem]{Corollary} 
\newtheorem{notation}[theorem]{Notation}
\newcommand{\N}{\bf{N}}
\newcommand{\de}{{\delta}}
\renewcommand{\Pr}{{\rm{Prob}}}
\newcommand{\pf}{P_{\geq 4}}
\newcommand{\pe}{P_0^{(1)}}
\newcommand{\pee}{P_0^{(2)}}
\newcommand{\peee}{P_0^{(3)}}
\title{On Permutations of Order Dividing a Given Integer} 
\author{Alice C. Niemeyer \and Cheryl E. Praeger}
\address{School of Mathematics and Statistics, \\
University of Western  Australia,\\
Nedlands, WA 6907\\
Australia.}
\email{alice@maths.uwa.edu.au\\
praeger@maths.uwa.edu.au}
\subjclass[2000]{Primary 20B30; Secondary  20P05}
\date{31 March 2006.}
\keywords{Symmetric Group, Proportions}
\begin{document}
\begin{abstract}
We give a detailed analysis of the proportion of elements in the
symmetric group on $n$ points whose order divides $m$, for $n$ sufficiently 
large and $m\geq n$ with  $m=O(n)$.
\end{abstract}

\maketitle

 \section{Introduction}
The study of orders of elements in finite symmetric groups goes back at 
least to the work of Landau~\cite[p. 222]{Landau09} who proved that the 
maximum order of an element of the symmetric group $S_n$ on $n$ points 
is $e^{(1+o(1))(n\log n)^{1/2}}$. 
Erd\H{o}s and Tur\'an took a probabilistic approach in their seminal work in the 
area, proving in  \cite{ErdosTuran65,ErdosTuran67} that, for a uniformly 
distributed random element $g\in S_n$, the random variable $\log|g|$ is 
normally distributed with mean $(1/2) \log^2n$ and standard deviation 
$\frac{1}{\sqrt{3}} \log^{3/2}(n)$. Thus most permutations in $S_n$ have order 
considerably larger than $O(n)$. Nevertheless, permutations of order
$O(n)$, that is, of order at most $cn$ for some constant $c$, have 
received some attention in the literature. Let $P(n,m)$ denote the 
proportion of permutations $g\in S_n$ which satisfy $g^m = 1$, that is to say, 
$|g|$ divides $m$. In 1952 Chowla, Herstein and Scott~\cite{Chowlaetal52} 
found a generating function and some recurrence relations for $P(n,m)$
for $m$ fixed, and
asked for its asymptotic behaviour for large $n$. Several years 
later, Moser and Wyman \cite{MoserWyman55,MoserWyman56} derived an 
asymptotic for $P(n,m)$, for a fixed prime number $m$,  expressing it as 
a contour integral. Then in 1986, Wilf~\cite{Wilf86} obtained explicitly the 
limiting value of $P(n,m)$ for an arbitrary fixed value of $m$ as 
$n\rightarrow\infty$, see also the paper \cite{Volynets} of Volynets.
Other authors have considered equations $g^m=h$, for a fixed integer
$m$ and $h\in S_n$, see
\cite{BouwerChernoff85,GaoZha,MineevPavlov76a,MineevPavlov76b}.  

However in many applications, for example in \cite{Bealsetal03}, 
the parameters $n$ and $m$ are linearly related, 
so that $m$ is unbounded as $n$ increases.
For the special case where $m=n$, Warlimont \cite{Warlimont78} showed in 1978 
that most elements $g\in S_n$ satisfying 
$g^n=1$ are $n$-cycles, namely he proved that $P(n,n)$, for $n$ 
sufficiently large, satisfies
\[
\frac{1}{n} + \frac{2c}{n^2} \le P(n,n) \le \frac{1}{n} + \frac{2c}{n^2} +
 O\left(\frac{1}{n^{3-o(1)}}\right)
\]
where $c =1$ if $n$ is even and
 $c=0$ if $n$ is odd. Note that the proportion of $n$-cycles in $S_n$ is
 $1/n$ and, if $n$ is even, the proportion of elements that are a product of 
two cycles of length $n/2$ is $2/n^2$. Warlimont's result proves in particular 
that most permutations 
satisfying $g^n=1$ are $n$-cycles. More precisely it implies that the 
conditional 
probability that a random element $g\in S_n$ is an $n$-cycle, given that $g^n
=1$, lies between $1-2c n^{-1} - O(n^{-2+o(1)})$ and $1-2c n^{-1} +
 O(n^{-2})$.  

The main results of this paper, Theorems~\ref{leadingterms} and \ref{bounds},
generalise Warlimont's result, giving a detailed analysis of $P(n,m)$ 
for large $n$, where $m=O(n)$ and $m\geq n$. For 
this range of values of $n$ and $m$, we have $rn\leq m<(r+1)n$ for some 
positive integer $r$, and we analyse $P(n,m)$ for $m$ in this range, for a 
fixed value of $r$ and $n\rightarrow\infty$. It turns out that the kinds of
elements that make the largest contribution to $P(n,m)$ depend heavily on the
arithmetic nature of $m$, for example, on whether $m$ is divisible by $n$
or by $r+1$.
We separate out several cases in the statement of our results.
Theorem~\ref{leadingterms} deals with two cases for which we give 
asymptotic expressions for $P(n,m)$. The first of these  reduces in the case 
$m=n$ to Warlimont's theorem~\cite{Warlimont78} (modulo a small discrepancy 
in the error term). For other values of $m$ lying strictly 
between $rn$ and $(r+1)n$ we 
obtain in Theorem~\ref{bounds} only an upper bound 
for $P(n,m)$, since the exact value depends on both the arithmetic nature 
and the size of $m$ (see also Remark~\ref{remark:leadinterms}).   
 
\begin{theorem}\label{leadingterms}
Let $n$ and $r$ be positive integers.
Then for a fixed value of $r$ and sufficiently large $n$, the following hold.
\begin{enumerate}
\item[(a)]  
$\displaystyle{
P(n,rn)=\frac{1}{n}+\frac{c(r)}{n^2}
+O\left(\frac{1}{n^{2.5-o(1)}}\right)
}$
where 
$c(r)=\sum
(1+\frac{i+j}{2r})$
and the sum is over all pairs $(i,j)$ such that $1\leq i,j\leq r^2, 
ij =r^2,$ and both $r+i, r+j$ divide $rn$.   
In particular $c(1)=0$ if $n$ is odd, and $2$ if $n$ is even.
\item[(b)] If $r=t!-1$ and $m=t!(n-t)=(r+1)n-t\cdot t!$, then 
\[
P(n,m)=\frac{1}{n}+\frac{t+c'(r)}{n^2}+O\left(\frac{1}{n^{2.5-o(1)}}
\right)
\]
where 
$c'(r)=\sum(1+\frac{i+j-2}{2(r+1)})$
and the sum is over all pairs $(i,j)$ such that $1< i,j\leq (r+1)^2, 
(i-1)(j-1) =(r+1)^2,$ and both $r+i, r+j$ divide $m$.   
\end{enumerate}
\end{theorem}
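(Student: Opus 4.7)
The plan is to expand $P(n,m)$ via the cycle-index sum
\[
P(n,m)=\sum_{k\ge 1}\frac{1}{k!}\sum_{(d_1,\ldots,d_k)}\frac{1}{d_1\cdots d_k},
\]
where the inner sum runs over ordered $k$-tuples of positive integers satisfying $d_i\mid m$ and $d_1+\cdots+d_k=n$. The strategy is to identify the cycle-types contributing the leading $1/n$ and $1/n^2$ terms and to show that all remaining types contribute $O(n^{-2.5+o(1)})$.

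For part~(a), with $m=rn$, the reparameterization $d_i=rn/e_i$ (with $e_i\ge r$ and $e_i\mid rn$) converts $\sum d_i=n$ into the Egyptian-fraction equation $\sum_{i=1}^{k}1/e_i=1/r$. The unique $k=1$ solution $e_1=r$ yields the $n$-cycle contribution $1/n$. For $k=2$ the identity $(e_1-r)(e_2-r)=r^2$ gives $e_1=r+i$, $e_2=r+j$ with $ij=r^2$ and $1\le i,j\le r^2$, subject to the integrality $(r+i),(r+j)\mid rn$ as in the statement; since $(r+i)(r+j)=r(2r+i+j)$ one has $1/(d_1d_2)=(2/n^2)(1+(i+j)/(2r))$, so summing over ordered pairs and absorbing the factor $1/2$ yields exactly $c(r)/n^2$. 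For $k\ge 3$, the equation $\sum 1/e_i=1/r$ has only finitely many positive-integer solutions with $e_i\ge r$ (for each $k,r$), each contributing $\prod e_i/((rn)^k k!)=O_{k,r}(n^{-k})$.

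For part~(b), with $m=t!(n-t)=(r+1)(n-t)$, the crucial observation is that every element of $S_t$ has order dividing $t!=r+1$ and hence dividing $m$, so the residual $t$ points can be permuted arbitrarily without violating $g^m=1$. For large $n$ one has $n\nmid m$, so $n$-cycles contribute nothing, and the dominant structures become: (i)~permutations with one $(n-t)$-cycle together with an arbitrary permutation of the remaining $t$ points, giving proportion $1/(n-t)=1/n+t/n^2+O(n^{-3})$; and (ii)~permutations with two cycles of lengths $d_1+d_2=n-t$ (both dividing $m$) together with an arbitrary permutation of the remaining $t$ points. For (ii) the Egyptian-fraction relation becomes $1/e_1+1/e_2=1/(r+1)$, giving $(i-1)(j-1)=(r+1)^2$ after the shift $e_\ell=r+i_\ell$, and the same algebra as in part~(a) produces contribution $c'(r)/(n-t)^2=c'(r)/n^2+O(n^{-3})$. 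Summing (i) and (ii) yields $1/n+(t+c'(r))/n^2$ modulo the error.

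The main obstacle is the bound $O(n^{-2.5+o(1)})$ on the sum of all remaining cycle-type contributions: in part~(a) the $k\ge 3$ terms, and in part~(b) the permutations whose large-cycle structure is not of form (i) or (ii) (for instance three or more large cycles summing to $n-t$, or one large cycle of length $d\ne n-t$ dividing $m$ with non-standard residual behaviour). I would handle the bounded-$k$ regime using the finite Egyptian-fraction classification above, which yields $O(n^{-k})$ per solution with constant depending on $k,r$; for the tail, I would use a generating-function estimate for $\sum_{k\ge K}[x^n]F(x)^k/k!$ with $F(x)=\sum_{d\mid m}x^d/d$, exploiting the arithmetic bounds $\tau(m)=m^{o(1)}$ and $\sigma_{-1}(m)=O(\log\log m)$ and treating separately ``large'' divisors $d>\sqrt n$ (at most $n^{o(1)}$ in number) and ``small'' divisors. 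The $n^{o(1)}$ in the error absorbs these divisor-count losses, and the exponent $5/2$ appears to arise from an arithmetic threshold argument at $\sqrt{n}$.
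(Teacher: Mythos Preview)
Your identification of the leading terms is correct and matches the paper's computation: the Egyptian-fraction parameterisation $d_i=m/e_i$ with $\sum 1/e_i=1/r$ (respectively $1/(r+1)$) is exactly how the paper locates the two-cycle contributions in Theorem~3.5, and your algebra reproducing $c(r)$ and $c'(r)$ is right. The lower bound $P(n,rn)\ge 1/n+c(r)/n^2$ follows immediately from your $k=1,2$ terms.

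The gap is in the error bound. For part~(a) you assert that each $k\ge 3$ contributes $O_{k,r}(n^{-k})$; this is true for fixed $k$, but you then need $\sum_{k\ge 3}C_{k,r}n^{-k}=O(n^{-2.5+o(1)})$, and the constants $C_{k,r}$ are not under control: even the single solution $e_1=\cdots=e_k=kr$ already gives a term of size $k^k/(k!\,n^k)\sim e^k/(\sqrt{k}\,n^k)$, and the full set of Egyptian-fraction solutions for large $k$ has cardinality and products growing far faster. Your generating-function sketch does not supply the missing uniform estimate. For part~(b) the difficulty is sharper: if $d$ is a divisor of $m$ with $d\ne n-t$ but $d$ close to $n$ (say $d=m/(r+2)$), you must bound the proportion of permutations on the remaining $n-d\asymp n$ points whose order divides $m$, and this residual proportion is precisely what your argument does not estimate.

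The paper's route is structurally different. Rather than decomposing by the total number of cycles, it fixes a threshold $m^s$ with $1/2<s\le 3/4$ and writes $P=P_0+P_1+P_2+P_3+P_{\ge 4}$ according to the number of cycles of length $\ge m^s$. The engine is the bound $P_0(n,m)=O(d(m)m^{2s}/n^3)$ on permutations with \emph{no} large cycle (Theorem~1.6, Lemma~2.4), proved by conditioning on the cycles through the three points $1,2,3$: the contribution splits as $P_0^{(1)}+P_0^{(2)}+P_0^{(3)}$ and each piece is bounded directly (Lemma~2.2). This $P_0$ estimate is then fed recursively into $P_1,P_2,P_3$ (Lemma~2.3) to control every residual factor that appears. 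Your proposal has no analogue of this three-point argument, and that is the missing idea. Note also that the exponent $5/2$ comes from $1+2s$ with $s=3/4$, not from a threshold at $\sqrt{n}$; taking $s=1/2$ in the paper's framework would only yield $O(n^{-2+o(1)})$, too weak to isolate the $c(r)/n^2$ term.
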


\begin{theorem}\label{bounds}
Let $n,m,r$ be positive integers such that $rn< m<(r+1)n$, and $\de$ 
a real number such that $0<\de\leq 1/4$.
Then for a fixed value of $r$ and sufficiently large $n$, 
\[
P(n,m)\leq \frac{\alpha.(r+1)}{m}+\frac{k(r)}
{n^2}+ O\left(\frac{1}{n^{2.5-2\de}}\right)
\]where $k(r) = \frac{4(r+3)^4}{r^2}$  and
\[
\alpha=\left\{\begin{array}{ll}
1&\mbox{if $r+1$ divides $m$ and $n-\frac{m}{r+1}
< \frac{m}{2(r+1)(r+2)-1}$}\\
0&\mbox{otherwise.}
	      \end{array}\right.
\]
\end{theorem}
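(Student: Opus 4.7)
The plan is to bound $P(n,m)$ via cycle-type decomposition. A permutation $g\in S_n$ satisfies $g^m=1$ iff every cycle length of $g$ divides $m$, and the proportion of permutations with a given cycle type $(a_d)$ is $\prod_d 1/(d^{a_d}\,a_d!)$. Hence
\[
P(n,m)=\sum_{(a_d)}\prod_{d\mid m,\,d\le n}\frac{1}{d^{a_d}\,a_d!},
\]
summed over tuples $(a_d)$ with $\sum_d d\,a_d=n$. Since $rn<m<(r+1)n$, every admissible cycle length is of the form $m/s$ for an integer $s\ge r+1$; in particular the maximal length is $m/(r+1)$, the next at most $m/(r+2)$, and so on.

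First I would split off the contribution from cycle types containing a cycle of the maximal admissible length $\ell_1=m/(r+1)$. This exists only if $(r+1)\mid m$, and since $\ell_1>n/2$ such a cycle is unique. The contribution equals $(r+1)\,P(k,m)/m$ with $k=n-\ell_1$. The condition $k<m/(2(r+1)(r+2)-1)$ ensures that $k$ is small relative to all reasonably large divisors of $m$, so that only short cycle lengths can appear on the leftover; using $P(k,m)\le 1$ then yields the $\alpha(r+1)/m$ term. When this condition fails, a more refined estimate (using that $k$ is then comparable to one of the intermediate divisors $m/(r+i)$) gives $P(k,m)=O(1/n)$, so the single-cycle contribution is absorbed into the $k(r)/n^2$ term, which justifies taking $\alpha=0$.

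Next I would collect the ``two large cycle'' contributions, from cycle types with two cycles of lengths $\ell_1=m/(r+i)$ and $\ell_2=m/(r+j)$. The constraint $\ell_1+\ell_2\le n$ combined with $m>rn$ forces $i,j\le r+3$, giving at most $(r+3)^2$ ordered pairs. Each contributes at most $1/(\ell_1\ell_2)\le(r+3)^2/(rn)^2$ times the trivially bounded proportion of admissible permutations on the leftover $n-\ell_1-\ell_2$ points. Summing over all such pairs and accounting for multiplicities yields the claimed bound $4(r+3)^4/(r^2n^2)=k(r)/n^2$.

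The main technical obstacle is bounding the residual cycle types: those containing neither a cycle of length $m/(r+1)$ nor two ``large'' cycles of the above form. I would partition these according to the number of ``long'' cycles (those of length exceeding a threshold $n^{1-\de}$); this is the decomposition reflected in the notation $\pe$, $\pee$, $\peee$, $\pf$. The part $\pf$ with four or more long cycles is bounded directly, since each long cycle contributes at most a factor $n^{-(1-\de)}$ to the cycle-type weight, so $\pf=O(n^{-4(1-\de)})\le O(n^{-3})$. The parts $\pe$, $\pee$, $\peee$ with fewer long cycles require subtler analysis, combining the estimate $\sum_{d\mid m,\,d\le n^{1-\de}}1/d=O(\log n)$ with a careful enumeration of how divisors of $m$ exceeding $n^{1-\de}$ can combine to add to $n$. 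Balancing the threshold $n^{1-\de}$ against these bounds yields the error $O(n^{-(2.5-2\de)})$, and the hypothesis $0<\de\le 1/4$ is precisely what keeps this strictly below both main terms. The hardest step will be controlling the three-long-cycle subcase ($\peee$) precisely enough that no additional $O(1/n^2)$ contamination arises beyond $k(r)/n^2$.
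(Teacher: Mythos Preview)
Your overall architecture---split by the number of long cycles, isolate the one-long-cycle contribution to extract $\alpha(r+1)/m$, isolate the two-long-cycle contribution to get $k(r)/n^2$, and push everything else into the error term---matches the paper's. But there is a real gap, and it originates in a misreading of the symbols $P_0^{(1)},P_0^{(2)},P_0^{(3)}$.

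These do \emph{not} denote the proportions with one, two, or three long cycles; those are $P_1,P_2,P_3$. Rather, each $P_0^{(i)}$ counts permutations with \emph{no} long cycle at all (every cycle length below the threshold $m^s$), further subdivided according to whether the three fixed points $1,2,3$ lie in one, two, or three distinct $g$-cycles. The whole point of this subdivision is to manufacture a usable bound on $P_0(n,m)$: conditioning on the cycles through $1,2,3$ extracts a factor $(n-3)!/n!\sim n^{-3}$, after which one only has to bound sums over small divisors by $d(m)m^{2s}$. This yields $P_0(n,m)=O(m^{2s+o(1)}/n^3)$, and one further pass through the recursion $P_0(n,m)=\frac{1}{n}\sum_{d\in D_s}P_0(n-d,m)$ improves it to $O(n^{2s+2\delta-4})$. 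That bound then feeds every other estimate in the argument, since each $P_i(n,m)$ for $i\ge 1$ is a sum of terms $\frac{1}{d_1\cdots d_i}\,P_0(n-\sum d_j,m)$.

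Without this three-points device your ``residual'' paragraph is underspecified: the bound $\sum_{d\mid m,\,d\le n^{1-\delta}}1/d=O(\log n)$ by itself does not show that permutations with all cycles short are rare to order $n^{-(2.5-2\delta)}$, and your text does not supply the missing mechanism. A secondary error: in the two-large-cycle analysis the assertion that both indices satisfy $i,j\le r+3$ is false. Only the index of the \emph{longer} cycle is so bounded; the other can range up to $2(r+1)(r+2)-1$ (this is Lemma~\ref{lem:divat}(b)). The paper deals with this by, for each of the at most $r+3$ choices of the longer cycle length $d$, singling out the largest admissible companion $d_0(d)$---this pair alone contributes to the $k(r)/n^2$ term---and showing that every strictly smaller companion $d_0<d_0(d)$ leaves a residue $n-d-d_0$ of size at least $m/(4(r+1)^2(r+2)^2)$, so that the $P_0$ bound pushes its contribution into the $O(n^{-(1+2s-2\delta)})$ error.
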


\begin{remark}\label{remark:leadinterms}
{\rm
(a) In Theorem~\ref{leadingterms}(a), the 
leading term $1/n$ is the proportion of $n$-cycles, while the
proportion of permutations 
containing an $(n-t)$-cycle is $\frac{1}{n-t} = \frac{1}{n} +
\frac{t}{n^2} + O(\frac{1}{n^3})$, which contributes to the first
two terms in Theorem~\ref{leadingterms}(b).
 The terms
$\frac{c(r)}{n^2}$ and $\frac{c'(r)}{n^2}$ correspond to permutations 
in $S_n$ that have two long cycles, and these have lengths $\frac{m}
{r+i}$ and $\frac{m}{r+j}$, for some $(i,j)$ satisfying the conditions in 
Theorem~\ref{leadingterms} (a) 
or (b) respectively, (where $m=rn$ in part (a)).

(b) In Theorem~\ref{bounds}, if $r+1$ divides $m$ and $n-m/(r+1)<\frac{m}{2(r+1)(r+2)-1}$, 
then  the term $(r+1)/m$ comes from elements containing a cycle of length 
$m/(r+1)$. The term $\frac{k(r)}{n^2}$ corresponds to 
permutations with exactly two `large' cycles. More details are given in Remark~\ref{rem:general}.
}
\end{remark}

Our interest in $P(n,m)$ arose from algorithmic applications concerning finite 
symmetric groups.
 For example, $n$-cycles in $S_n$ satisfy the equation $g^n=1$, while elements 
 whose cycle structure consists of a 2-cycle and a single additional cycle 
of odd length $n-t$, where $t = 2$ or $3$,  satisfy  
 the equation $g^{2(n-t)} =1$. For an element $g$ of the latter type we can 
construct a transposition by forming the power $g^{n-t}$. In many cases the 
group $S_n$ is not given as a permutation group in its natural representation, 
and, while it is possible  to test whether an element $g$ satisfies one of 
these equations, it is often impossible to determine its cycle structure 
with certainty. It is therefore important to have lower bounds on the 
conditional probability that a random element $g$ has a desired cycle 
structure, given that it satisfies an appropriate equation. Using 
Theorem~\ref{leadingterms}, we obtained the following estimates of various 
conditional probabilities.

\begin{corollary}\label{cdnlprobs1}
Let $r, n$ be positive integers and let $g$ be a uniformly distributed random 
element of $S_n$. Then for a fixed value of $r$ and sufficiently large $n$, 
the following hold, where $c(r)$ and $c'(r)$ are as in 
Theorem~$\ref{leadingterms}$.
\begin{enumerate}
\item[(a)]  The conditional probability $P$ that $g$ is an $n$-cycle, given
that $|g|$ divides $rn$, satisfies
\begin{eqnarray*}
1-\frac{c(r)}{n}-O\left(\frac{1}
{n^{1.5-o(1)}}\right)&\leq& P
\leq 1-\frac{c(r)}{n}+O\left(\frac{1}
{n^{2}}\right).\\
\end{eqnarray*}
\item[(b)] If $r=t!-1$, then the conditional probability $P$ that $g$ 
contains an $(n-t)$-cycle, given that $|g|$ divides $t!(n-t)$, satisfies
\begin{eqnarray*}
1-\frac{c'(r)}{n}-O\left(\frac{1}
{n^{1.5-o(1)}}\right)&\leq& P
\leq 1-\frac{c'(r)}{n}+O\left(\frac{1}
{n^{2}}\right).\\
\end{eqnarray*}
\end{enumerate}
\end{corollary}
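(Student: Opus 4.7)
The plan is to write each conditional probability $P$ as an explicit quotient and substitute the asymptotic formula from Theorem~\ref{leadingterms}. For part~(a), every $n$-cycle has order exactly $n$, which divides $rn$, so the joint event ``$g$ is an $n$-cycle and $g^{rn}=1$'' collapses to ``$g$ is an $n$-cycle''. Since the proportion of $n$-cycles in $S_n$ equals $1/n$ exactly,
\[
P=\frac{1/n}{P(n,rn)}=\frac{1}{1+c(r)/n+O(n^{-(1.5-o(1))})},
\]
after applying Theorem~\ref{leadingterms}(a). Expanding $1/(1+x)$ as a geometric series and tracking the direction in which the error term in $P(n,rn)$ pushes the quotient then yields both inequalities in~(a).

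For part~(b), I would first enumerate the elements $g\in S_n$ that contain an $(n-t)$-cycle. Choosing the $(n-t)$-cycle ($\binom{n}{n-t}(n-t-1)!$ ways) and extending arbitrarily on the remaining $t$ points ($t!$ ways) gives exactly $n!/(n-t)$ such elements, so the proportion is $1/(n-t)=1/n+t/n^2+O(1/n^3)$. One must also verify that every such $g$ satisfies $g^{t!(n-t)}=1$: the restriction to the $t$ complementary points lies in $S_t$ and has order dividing $t!$, so $|g|$ divides $\mathrm{lcm}(n-t,t!)$, which in turn divides $t!(n-t)$. Thus
\[
P=\frac{1/(n-t)}{P(n,t!(n-t))},
\]
and applying Theorem~\ref{leadingterms}(b) gives
\[
P=\frac{1+t/n+O(1/n^2)}{1+(t+c'(r))/n+O(n^{-(1.5-o(1))})}.
\]
Expanding as in~(a), the $t/n$ terms in numerator and denominator cancel, leaving $-c'(r)/n$ as the leading correction, plus an error of the asserted order.

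The arithmetic is essentially mechanical; the genuine care needed is bookkeeping of the error terms to justify the asymmetric $O(n^{-(1.5-o(1))})$ lower-side and $O(1/n^{2})$ upper-side errors stated in the corollary. This asymmetry is not visible from the two-sided asymptotic alone, so I expect the main step to be extracting a sharper one-sided estimate for $P(n,m)$ from the proof of Theorem~\ref{leadingterms}: specifically, the fact that the leading contributions $1/n$ and $c(r)/n^{2}$ (respectively $1/(n-t)$ and $c'(r)/n^{2}$) arise from explicit, separately enumerable classes of elements should give a lower bound on $P(n,m)$ with a much smaller error than $O(n^{-(2.5-o(1))})$, which after inversion becomes the $O(1/n^{2})$ upper bound on $P$.
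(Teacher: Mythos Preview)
Your proposal is correct and follows essentially the same approach as the paper: write the conditional probability as $\Pr(A)/P(n,m)$, check $A\subseteq B$, and substitute the asymptotics from Theorem~\ref{leadingterms}. Your anticipation of the source of the asymmetric error is also exactly right: the paper uses the clean lower bound $P(n,rn)\ge \tfrac{1}{n}+\tfrac{c(r)}{n^2}$ (and the analogous bound for part~(b)), established in the proof of Theorem~\ref{rn} by direct enumeration of $n$-cycles and two-long-cycle permutations, to obtain the $O(1/n^2)$ upper bound on $P$ after inversion.
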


We note that Theorem~\ref{leadingterms} improves the upper bound of
$(1+o(1))/n$ obtained in \cite[Theorem 3.7]{Bealsetal03}, while 
Corollary~\ref{cdnlprobs1} improves the corresponding lower bound of 
$1-o(1)$ of  \cite[Theorem 1.3(a)]{Bealsetal03}. These results have been 
developed and refined further in \cite{NiemeyerPraeger05b} to derive explicit 
`non-asymptotic' bounds that hold for all $n$ and can be applied directly 
to improve the recognition algorithms for $S_n$ and 
$A_n$ in \cite{Bealsetal03}.

\bigskip\noindent{\bf Commentary on our approach}

Warlimont's proof in \cite{Warlimont78} of an upper bound for $P(n,n)$
and the  proof of \cite[Theorem 3.7]{Bealsetal03} by  Beals and Seress
of an  upper bound  for $P(n,m)$  for certain values  of $m$,  rely on
dividing the elements  of $S_n$ into disjoint unions  of smaller sets.
Warlimont divides the elements according  to how many `large' cycles a
permutation  contains. Fix  a real  number $s$  such that  $1/2 <  s <
1$. We say that a cycle  of a permutation in $S_n$ is \emph{$s$-small}
if its  length is  strictly less than  $n^s$, and  is \emph{$s$-large}
otherwise.  Beals  and Seress divide  the elements according  to the 
number of cycles in which three specified  points lie. Both
strategies are sufficient to  prove Warlimont's result or the slightly
more  general results  of  \cite[Theorem 3.7]{Bealsetal03}.   However,
neither is sufficient  to prove the general results  in this paper. In
particular, Warlimont's  approach breaks down when  trying to estimate
the proportion of  elements with no or only one  large cycle, which is
perhaps   why   no   progress   has   been  made   since   his   paper
\cite{Warlimont78}  towards  answering  Chowla, Herstein  and  Scott's
original question about the asymptotic behaviour of $P(n,m)$ for large
$n$.  One of  the key ideas that allowed  us to generalise Warlimont's
work is the  insight that the number of  permutations which contain no
$s$-large cycles  can be estimated  by considering their  behaviour on
three  specified points.   Another important  strategy is  our careful
analysis of elements containing only one large cycle by separating out
divisors of $m$ which are very close to $n$.

We regard Theorem~\ref{lem:props} below as the main outcome of the first stage 
of our analysis. It is used in the proof of Theorem~\ref{leadingterms}.
The statement of Theorem~\ref{lem:props} involves the number $d(m)$ of 
positive divisors of $m$, and the fact that $d(m)=m^{o(1)}$, see 
Notation~\ref{notation}~(c). It estimates the 
proportion $P_0(n,m)$ of elements of $S_n$ of order dividing $m$ and having no 
$s$-large cycles.

\begin{theorem}\label{lem:props}
Let $n,m$ be positive integers such that $m\geq n$, and let $s$ be a positive 
real number such that $1/2<s<1$. Then, with $P_0(n,m)$ 
as defined above, there is a constant $c$ such that
\[
P_0(n,m)<\frac{c d(m)m^{2s}}{n^3}=O\left(\frac{m^{2s+o(1)}}{n^3}\right).
\]
\end{theorem}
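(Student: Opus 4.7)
The plan is to count permutations $g\in S_n$ of order dividing $m$ with no $s$-large cycle by fixing three specified points, say $1,2,3$, and classifying $g$ according to how these three points are distributed among its cycles. The key starting point is that every cycle length of such a $g$ is a divisor of $m$ strictly less than $n^s$, so each such cycle length ranges over at most $d(m)$ values.

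Write $F(n,m)$ for the number of permutations of $S_n$ of order dividing $m$ with no cycle of length $\geq n^s$, so that $P_0(n,m)=F(n,m)/n!$. I would partition $F(n,m)$ into five subsums, according to whether $1,2,3$ lie in three distinct cycles of $g$, in exactly two distinct cycles (three subcases, depending on which pair of points share a cycle), or in a single common cycle. Each subsum has the shape
\[
\sum A(\ell_1,\dots)\,F\bigl(n-\textstyle\sum_i\ell_i,\;m\bigr),
\]
where each $\ell_i$ ranges over divisors of $m$ smaller than $n^s$ and $A(\ell_1,\dots)$ is a product of multinomial coefficients (choosing the other members of each distinguished cycle) and the $(\ell_i-1)!$ cyclic arrangements of each such cycle. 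After dividing by $n!$ and using the trivial bound $F(n',m)/n'!\leq 1$, the combinatorial factors telescope, and the contribution of each case reduces to $1/(n(n-1)(n-2))$ times a short polynomial in the $\ell_i$ summed over allowed divisors of $m$.

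The dominant case is ``all three points in a single cycle of length $\ell$'': here the polynomial is $(\ell-1)(\ell-2)<n^{2s}$ and the sum is over at most $d(m)$ divisors, yielding at most $cd(m)n^{2s}/n^3\leq cd(m)m^{2s}/n^3$ since $m\geq n$. For ``three distinct cycles'' the polynomial is the constant $1$ and the triple sum contributes at most $d(m)^3$; for each of the three ``two cycles'' subcases the polynomial is linear and bounded by $n^s$, so the double sum contributes at most $d(m)^2 n^s$. Using $d(m)=m^{o(1)}$, both subordinate bounds are dominated by $d(m)m^{2s}/n^3$ for large $n$, and summing all five cases gives $P_0(n,m)<cd(m)m^{2s}/n^3$ for an absolute constant $c$; the stated $O(m^{2s+o(1)}/n^3)$ follows at once.

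The only delicate step is the bookkeeping in the three ``two cycles'' subcases, where one must verify that the multinomial-and-factorial factors collapse correctly so that the trivial bound on $F(n',m)/n'!$ can be applied uniformly; the rest is a routine exponent chase.
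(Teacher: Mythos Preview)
Your approach is essentially identical to the paper's: partition $P_0(n,m)$ according to how the points $1,2,3$ fall among the cycles of $g$, bound each piece using the trivial estimate $P_0(n',m)\le 1$ together with the fact that each $s$-small cycle length is a divisor of $m$ below the threshold, and then sum the resulting short divisor sums. The only quibble is that the paper's formal definition of $s$-small uses the threshold $m^s$ rather than $n^s$, so that the threshold does not move when one recurses in $n$; with that adjustment (which, since $m\ge n$, only helps your inequalities) your argument matches the paper's proof line for line.
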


Theorem~\ref{lem:props} is proved in Section~\ref{sec:proportions} and
the other results are proved in Section~\ref{sec:stheo}. 

\section{Proof of Theorem~\ref{lem:props}}\label{sec:proportions}

In this section we introduce some notation that will be used throughout 
the paper, and we prove Theorem~\ref{lem:props}.  
Note that the order $|g|$ of a permutation $g \in S_n$
divides $m$ if and only if the length of each cycle of $g$
divides $m$.  
 Thus $P(n,m)$ is the proportion of elements in $S_n$ all of
whose cycle lengths divide $m$. 
As indicated in the introduction, we estimate $P(n,m)$ by partitioning this
proportion in various ways. Sometimes the partition is according to the number 
of large cycle 
lengths, and at other times it is defined in terms of the cycles containing 
certain points. We specify these partitions, and give some other 
notation, below.

\begin{notation}\label{notation}{\rm
 The numbers $n,m$ are positive integers, and the symmetric group $S_n$ acts
naturally on the set $\Omega=\{1,2,\dots,n\}$. 
\begin{enumerate}
\item[(a)]  $s$ is a real number such that $1/2 < s < 1$.  A divisor $d$ of 
$m$ is said to be  $s$-\emph{large} or $s$-\emph{small} if  $d \geq m^{s}$ 
or $d < m^s$,  respectively; $D_\ell$ and $D_s$ denote the sets of all 
$s$-large and $s$-small divisors $d$ of $m$, respectively,  such that 
$d \le n$.
\item[(b)] For $g\in S_n$ with order dividing $m$, a $g$-cycle of length $d$ 
is called  $s$-\emph{large} or $s$-\emph{small} according as $d$ is an 
$s$-large or $s$-small divisor of $m$. 
\item[(c)] $d(m)$ denotes the number of positive divisors of
$m$ and $\delta$ and $c_\delta$ are positive real numbers such that
  $\delta < s$ and  $d(m) \le c_\delta m^{\delta}$ 
for all $m \in \N$.
\item[(d)] The following functions of $n$ and $m$ denote the proportions of
elements $g\in S_n$ of order dividing $m$ and 
satisfying the additional properties given in the last column of 
the table below. 

\noindent
\begin{center}
\begin{tabular}{|ll|}
\hline
$P_0(n,m)$ &\quad all  $g$-cycles are $s$-small\\
$\pe(n,m)$ &\quad all  $g$-cycles are $s$-small and\\
             &\quad $1,2,3$ lie in the same $g$-cycle, \\
$\pee(n,m)$ &\quad  all  $g$-cycles are $s$-small and\\ 
             &\quad $1,2,3$ lie in exactly two $g$-cycles\\
$\peee(n,m)$ &\quad   all  $g$-cycles are $s$-small and\\
             &\quad $1,2,3$ lie in three different $g$-cycles\\
$P_1(n,m)$ &\quad  $g$ contains exactly one $s$-large cycle\\
$P_2(n,m)$ &\quad  $g$ contains exactly two $s$-large cycles\\
$P_3(n,m)$ &\quad  $g$ contains exactly three $s$-large cycles \\
$\pf(n,m)$ &\quad  $g$ contains at least four $s$-large cycles \\
\hline
\end{tabular}
\end{center}
\end{enumerate}
}
\end{notation}
\bigskip

With respect to part (c) we note, 
see \cite[pp. 395-396]{NivenZuckermanetal91}, that for each $\delta >
0$ there exists a constant $c_\delta > 0$ such that $d(m) \le c_\delta
m^\delta$ for all $m \in \N.$ This means that the parameter $\delta$
can be any positive real number and in particular that $d(m) = m^{o(1)}.$

\bigskip
\noindent
Note that 
\begin{equation}\label{eq-pi}
P_0(n,m) = \pe(n,m) + \pee(n,m) + \peee(n,m)
\end{equation}
and 
\begin{equation}\label{eq-qi}
P(n,m) = P_0(n,m) + P_1(n,m) + P_2(n,m) + P_3(n,m)+\pf(n,m).
\end{equation}
We begin by deriving recursive expressions for the $P_0^{(i)}(n,m)$.

\begin{lemma}\label{lem:theps}
Using Notation~$\ref{notation}$, the following hold, where we take 
$P_0(0,m) = 1.$
\begin{enumerate}
\item[(a)] $\displaystyle{\pe(n,m) = \frac{(n-3)!}{n!} 
\sum_{d \in D_s,\ d\ge 3}{(d-1)(d-2)}P_0(n-d,m),}$
\item[(b)]  $\displaystyle{
\pee(n,m) = \frac{3(n-3)!}{n!}\sum_{\stackrel{d_1, d_2 \in D_s }{2\le
d_2,\ d_1+d_2\le n}}  (d_2-1)P_0(n-d_1-d_2,m)}$, 
\item[(c)] $\displaystyle{
\peee(n,m) = \frac{(n-3)!}{n!} \sum_{\stackrel{d_1,d_2,d_3\in D_s
    }{d_1+d_2+d_3  \le n}}  
P_0(n-d_1-d_2 -d_3,m)}$. 
\end{enumerate}
\end{lemma}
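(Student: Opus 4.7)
The plan is to derive each identity by direct enumeration of the permutations $g\in S_n$ with all cycles being $s$-small divisors of $m$, partitioned according to how $\{1,2,3\}$ is distributed among the cycles of $g$. The common template is: fix the cycles of $g$ containing $1,2,3$, count these configurations in $S_n$, then multiply by the number of ways to extend the permutation on the remaining $n-k$ points so that all of its cycles remain $s$-small divisors of $m$. The last factor is $(n-k)!\,P_0(n-k,m)$, where $k$ is the number of points used up by the fixed cycles (with the convention $P_0(0,m)=1$ handling the boundary case $k=n$). Dividing the total count by $n!$ yields the stated proportion.

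For part (a), I would condition on the single $g$-cycle $C$ of length $d$ containing $1,2,3$, which forces $d\in D_s$ with $d\ge 3$. The number of such cycles in $S_n$ is $\binom{n-3}{d-3}(d-1)!$: first pick the remaining $d-3$ points of $C$ from the $n-3$ points other than $1,2,3$, then arrange the $d$ points of $C$ in cyclic order. Multiplying by $(n-d)!\,P_0(n-d,m)$, summing over valid $d$, and dividing by $n!$ collapses the factorials via $(d-1)!/(d-3)!=(d-1)(d-2)$ to give the claimed formula. For part (b), there are three partitions of $\{1,2,3\}$ into a singleton $\{a\}$ and a pair; for each such choice the number of ordered pairs of disjoint cycles $(C_1,C_2)$ with $|C_1|=d_1$, $|C_2|=d_2\ge 2$, $a\in C_1$, and the remaining pair lying in $C_2$, equals $\binom{n-3}{d_1-1}(d_1-1)!\,\binom{n-d_1-2}{d_2-2}(d_2-1)!$. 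Multiplying by $(n-d_1-d_2)!\,P_0(n-d_1-d_2,m)$, summing over valid $(d_1,d_2)\in D_s\times D_s$ with $d_2\ge 2$ and $d_1+d_2\le n$, incorporating the factor $3$ from the three partitions, dividing by $n!$, and telescoping the factorials down to $(d_2-1)$ yields the desired expression. Part (c) is analogous: condition on three disjoint cycles of lengths $d_1,d_2,d_3\in D_s$ containing $1,2,3$ respectively; the number of such triples is $\binom{n-3}{d_1-1}(d_1-1)!\,\binom{n-d_1-2}{d_2-1}(d_2-1)!\,\binom{n-d_1-d_2-1}{d_3-1}(d_3-1)!$. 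No combinatorial prefactor is needed because the three cycles are already labelled by which of $1,2,3$ they contain; multiplying by the tail factor, summing and dividing by $n!$ gives the claim.

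The only genuine work is the bookkeeping: one must verify that the products of binomials and factorials telescope cleanly to $(n-3)!$ times a simple polynomial in the $d_i$, and that the summation ranges correctly enforce both $d_i\in D_s$ and the non-negativity of each residual set size so that $P_0(n-\sum d_i,m)$ is well defined. No deeper tool is required beyond the classical count $(d-1)!$ of $d$-cycles on a $d$-element set, and the observation that once the cycles through $1,2,3$ are fixed, the action on the remaining points is an independent instance of the same ``all cycles $s$-small'' problem on a smaller alphabet, whose proportion is $P_0(n-k,m)$ by definition.
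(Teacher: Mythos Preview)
Your proposal is correct and follows essentially the same approach as the paper's proof: both arguments condition on the cycle(s) through $1,2,3$, count the choices for those cycles via the same binomial-times-factorial expressions, multiply by $(n-k)!\,P_0(n-k,m)$ for the residual permutation, and then simplify. The bookkeeping you describe (telescoping to $(n-3)!$ times the appropriate factor, the prefactor $3$ in part (b), and the summation constraints) matches the paper exactly.
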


\begin{proof}
We first compute $\pe(n,m)$, the proportion of those permutations $g\in S_n$
of order dividing $m$ with all cycles $s$-small, for which the points
 $1, 2, 3$ are contained in one $g$-cycle, $C$ say, of 
length $d$ with $d \in D_s$ and $d\geq 3.$
We can choose the remainder of the  support set of $C$ in 
$\binom{n-3}{d-3}$ ways and then 
the cycle $C$ in $(d-1)!$ ways. 
The rest of the
permutation $g$ can be chosen in $P_0(n-d,m)(n-d)!$ ways. Thus,
for a given 
$d$, the number of such elements is
$(n-3)!(d-1)(d-2)P_0(n-d,m)$. We obtain
the proportion $\pe(n,m)$ by summing over all  
$d\in D_s$ with $d\geq3$, and then dividing by $n!$, 
so part (a) is proved. 

Next we determine the proportion $\pee(n,m)$  of those permutations $g\in S_n$
of order dividing $m$  with all cycles $s$-small, 
for which one of the points  $1, 2, 3$  is contained 
in a $g$-cycle $C_1$, and the other two of these points are contained in a 
different $g$-cycle $C_2$. Let $d_1$ and $d_2$ denote the lengths of the 
cycles $C_1$ and $C_2$, respectively, so $d_1, d_2\in D_s$ and $d_2 \ge 2.$
Firstly we choose the support set of $C_1$ in $\binom{n-3}{d_1-1}$ ways and 
the cycle $C_1$ in $(d_1-1)!$ ways. 
Secondly we choose the support set of $C_2$ in $\binom{n-d_1 -2}{d_2-2}$ ways and 
the cycle $C_2$ in $(d_2-1)!$ ways. 
Finally, the rest of the
permutation $g$ is chosen in $P_0(n-d_1
-d_2,m)(n-d_1-d_2)!$ ways. 
Thus, for a given pair $d_1, d_2$, the number of these elements is 
$(n-3)!(d_2-1)P_0(n-d_1-d_2,m)$. 
Since there are three choices for $C_1\cap\{ 1, 2, 3\}$, we have 
\begin{eqnarray*}
\pee(n,m) & = & \frac{3(n-3)!}{n!}\sum_{\stackrel{d_1, d_2 \in D_s}{2\le
    d_2,\ d_1+d_2 \le n}} (d_2-1) P_0(n-d_1-d_2,m). \\ 
\end{eqnarray*}
Finally we consider the proportion  $\peee(n,m)$ 
 of those permutations $g\in S_n$ of order dividing $m$ with all cycles 
$s$-small, for which each one of the
points $1,  2,  3$ 
is  contained in  a separate $g$-cycle, say $C_i$ contains $i$ and
$C_i$ has length $d_i \in D_s$.  We can
choose, in order, the support set 
of  $C_1$  in  $\binom{n-3}{d_1-1}$  ways  and  the  cycle  $C_1$  in
$(d_1-1)!$ ways, the support set of $C_2$ in $\binom{n-d_1 -2}{d_2-1}$
ways and  the cycle $C_2$  in $(d_2-1)!$ ways, the support  set of
$C_3$ in  $\binom{n-d_1 -d_2 -1}{d_3-1}$  ways and the  cycle $C_3$ in
$(d_3-1)!$ ways, and the rest of the permutation in
$P_0(n-d_1-d_2-d_3,m)(n-d_1-d_2-d_3)!$ ways. The expression for $\peee(n,m)$ 
in part (c) now follows.
\end{proof}

Next we derive 
expressions for the $P_i(n,m)$ and $\pf(n,m)$.

\begin{lemma}\label{lem:qi}
Using Notation~$\ref{notation}$, and writing $P_0(0,m)=1$,
\begin{enumerate}
\item[(a)] ${\displaystyle P_0(n,m) = \frac{1}{n}\sum_{d\in D_s}
P_0(n-d, m),}$
\item[(b)] ${\displaystyle P_1(n,m) = \sum_{d\in D_\ell }
\frac{1}{d} P_0(n-d, m)},$  
\item[(c)] ${\displaystyle P_{2}(n,m) = \frac{1}{2} \sum_{d_1, d_2\in D_\ell }
\frac{1}{d_1d_2} P_0(n-d_1-d_2, m)},$
where the sum is over all ordered pairs $(d_1, d_2)$ with $d_1 + d_2
\le n$.
\item[(d)] ${\displaystyle P_3(n,m) = \frac{1}{6}\sum_{d_1, d_2, d_3
    \in D_\ell} 
\frac{1}{d_1d_2d_3} P_0(n-d_1-d_2 - d_3, m)}$, where the sum  is over all 
ordered triples $(d_1,d_2,d_3)$ with $d_1 + d_2 + d_3 \le n$.
\item[(e)] ${\displaystyle \pf(n,m) \leq 
\frac{1}{24}\sum_{d_1, d_2, d_3,d_4 \in D_\ell}
\frac{1}{d_1d_2d_3d_4} P(n-d_1-d_2 - d_3-d_4, m)}$,
where the sum is over all ordered $4$-tuples
$(d_1,d_2,d_3,d_4)$
with $d_1 + d_2 + d_3+d_4 \le n$.
\end{enumerate}
\end{lemma}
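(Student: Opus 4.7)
The plan is to count permutations in each class directly, by choosing the ``distinguished'' cycles first and then invoking $P_0(n-d,m)$ or $P(n-d,m)$ to count the remainder. The single identity doing all the work is
\[
\binom{n-a}{d}(d-1)!\,(n-a-d)! = \frac{(n-a)!}{d},
\]
which turns ``pick the support of a $d$-cycle, arrange it cyclically, and count the rest'' into a factor of $1/d$ after dividing by the $(n-a)!$ hanging in front.

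For part (a), I would condition on the length $d$ of the $g$-cycle through the point $1$. Since all cycles of $g$ are $s$-small, $d\in D_s$; the remaining $d-1$ support points come from $\{2,\dots,n\}$ in $\binom{n-1}{d-1}$ ways, the cyclic order in $(d-1)!$ ways, and the restriction of $g$ to the complementary $n-d$ points is an arbitrary $s$-small-only element of order dividing $m$, contributing $(n-d)!\,P_0(n-d,m)$ choices. Summing and dividing by $n!$ uses $\binom{n-1}{d-1}(d-1)!(n-d)!/n!=1/n$ and gives (a).

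For parts (b)--(d), I would fix an ordered tuple $(d_1,\dots,d_k)\in D_\ell^k$ with $k\in\{1,2,3\}$ of lengths of the $s$-large cycles. Peeling them off one by one,
\[
\prod_{i=1}^{k}\binom{n-d_1-\cdots-d_{i-1}}{d_i}(d_i-1)! \cdot (n-{\textstyle\sum}d_i)!\,P_0(n-{\textstyle\sum}d_i,m) = \frac{n!\,P_0(n-{\textstyle\sum}d_i,m)}{d_1\cdots d_k},
\]
and the completing permutation is forced to be $s$-small-only because exactly $k$ large cycles are permitted. Each unordered selection of $k$ large cycles is counted $k!$ times, so I divide by $k!=1,2,6$ and by $n!$ to obtain the three formulas.

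For part (e), the same counting applies to ordered $4$-tuples $(d_1,d_2,d_3,d_4)\in D_\ell^4$, with one essential change: the complementary permutation is only required to have order dividing $m$, so it contributes $(n-\sum d_i)!\,P(n-\sum d_i,m)$ and may itself contain $s$-large cycles. Hence a permutation with exactly $k\ge 4$ large cycles is counted $k(k-1)(k-2)(k-3)\ge 4!$ times by our sum of ordered tuples, and after dividing by $4!=24$ we get at least one count per such permutation, yielding the stated inequality. This multiplicity/overcounting step in (e) is the only point that needs a moment of care; everything else is the same combinatorial identity used five times.
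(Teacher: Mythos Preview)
Your proof is correct and follows essentially the same approach as the paper: condition on the cycle through a fixed point for (a), and for (b)--(e) count ordered tuples of large cycles together with a completion on the remaining points, then divide by the appropriate multiplicity. The paper phrases (c)--(e) as counting tuples $(C_1,\ldots,C_k,g)$ rather than summing over length-tuples, but this is the same argument, and your justification of the inequality in (e) via $k(k-1)(k-2)(k-3)\ge 24$ matches the paper's reasoning exactly.
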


\begin{proof} For each permutation in $S_n$ of order dividing $m$ 
and all cycles $s$-small, the point 1 lies in a cycle of length $d$,
for some $d\in D_s$. For this value of $d$ there are $\binom{n-1}
{d-1}(d-1)!$ choices of $d$-cycles containing 1, and $P_0(n-d,m)(n-d)!$
choices for the rest of the permutation. Summing over all $d\in D_s$ 
yields part (a).

The proportion of permutations in $S_n$ of order dividing $m$ and having
exactly one $s$-large cycle of length $d$ is $\binom{n}{d}(d-1)! P_0(n-d,m) 
(n-d)!/n!$. Summing over all $d\in D_\ell$ yields part (b).

In order  to find the  proportion of elements in $S_n$ of order dividing 
$m$ and having exactly two $s$-large  cycles we
count triples $(C_1, C_2, g)$,  where $C_1$ and $C_2$ are cycles of
lengths $d_1$ and $d_2$ respectively, $d_1, d_2\in D_\ell$, 
$g\in S_n$ has order dividing $m$, $g$ contains $C_1$ and $C_2$ in its  
disjoint cycle representation, and all other $g$-cycles are $s$-small.
For a given $d_1, d_2$, we have $\binom{n}{d_1}(d_1-1)!$
choices for $C_1$, then $\binom{n-d_1}{d_2}(d_2-1)!$ choices for
$C_2$, and then the rest of the element $g$ containing $C_1$ and $C_2$ can
be chosen in $P_0(n-d_1-d_2,m)(n-d_1-d_2)!$ ways. Thus the ordered pair 
$(d_1,d_2)$ contributes $\frac{n!}{d_1d_2}P_0(n-d_1-d_2,m)(n-d_1-d_2)!$ 
triples, and each element $g$ with the 
properties required for part (c) contributes exactly two of these triples.
Hence, summing over ordered pairs $d_1, d_2\in D_\ell$  yields (c).

Similar counts are used for parts (d) and (e). For $P_3(n,m), \pf(n,m)$ 
we count  4-tuples $(C_1, C_2,C_3, g)$ and $5$-tuples $(C_1,C_2,C_3,C_4,g)$ 
respectively,
such that, for each $i$, $C_i$ is a cycle of length $d_i$ for some 
$d_i\in D_\ell$, $g\in S_n$ has order dividing $m$, and $g$ contains 
all the cycles $C_i$ in its disjoint cycle representation. 
The reason we have an 
inequality for $\pf(n,m)$ is that in this case each $g$ occurring has at 
least four $s$-large cycles 
 and hence occurs in at least 24 of the 5-tuples, but possibly more.
\end{proof}

We complete this section by giving a proof of Theorem~\ref{lem:props}. The 
ideas for its proof were developed from arguments in Warlimont's paper 
\cite{Warlimont78}.

\begin{lemma}\label{newPs} Let $m\geq n\geq3$, and let $s, \de$ be as in 
Notation~{\rm\ref{notation}}. Then 
\[
P_0(n,m) < \frac{(1 + 3c_\delta + c_\delta^2)d(m)m^{2s}}{n(n-1)(n-2)}<
\frac{c'd(m)m^{2s}}{n^3}= O\left(\frac{m^{2s+\delta}}{n^3}\right)
\]
where, if $n\geq6$, we may take
\[
c'=\left\{\begin{array}{ll}
2(1 + 3c_\delta + c_\delta^2)&\mbox{for any $m\geq n$}\\
10&\mbox{if $m\geq c_\delta^{1/(s-\delta)}$.}
          \end{array}\right.
\]
In particular Theorem~{\rm\ref{lem:props}} is true. Moreover, if  
in addition $n\geq m^s+cn^a$ for some positive constants $a,c$ with $a\leq 1$, 
then $P_0(n,m)=O\left(\frac{m^{2s+2\de}}{n^{1+3a}}\right)$.
\end{lemma}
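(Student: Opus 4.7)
The plan is to bound $P_0(n,m)$ by decomposing it via equation~(\ref{eq-pi}) and estimating each of $\pe(n,m),\pee(n,m),\peee(n,m)$ through Lemma~\ref{lem:theps}, then to derive the final refined assertion by invoking Lemma~\ref{lem:qi}(a) recursively. The three individual estimates will all be reduced to a common upper bound proportional to $d(m)m^{2s}/(n(n-1)(n-2))$, after which addition yields the claimed inequality.

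For each of the three sums in Lemma~\ref{lem:theps}, I would bound the number of summands by $d(m)$, $d(m)^2$, $d(m)^3$ respectively (each $d_i$ ranges over $D_s$, of size at most $d(m)$), use the trivial estimate $P_0(\cdots)\leq 1$, and exploit $d<m^s$ for $d\in D_s$. This gives $\pe(n,m)\leq d(m)m^{2s}/(n(n-1)(n-2))$ from $(d-1)(d-2)<m^{2s}$, $\pee(n,m)\leq 3d(m)^2 m^s/(n(n-1)(n-2))$ from $(d_2-1)<m^s$, and $\peee(n,m)\leq d(m)^3/(n(n-1)(n-2))$. Since $d(m)\leq c_\delta m^\delta$ and $\delta<s$, each extra factor of $d(m)$ beyond the first can be replaced by $c_\delta m^s$, yielding a common bound with coefficient $1+3c_\delta+c_\delta^2$ times $d(m)m^{2s}/(n(n-1)(n-2))$. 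Summing via (\ref{eq-pi}) gives the first displayed inequality of the lemma, and for $n\geq 6$ the estimate $n(n-1)(n-2)\geq n^3/2$ produces the generic value of $c'$. When additionally $m\geq c_\delta^{1/(s-\delta)}$, the inequality $d(m)\leq m^s$ holds outright, the $c_\delta$ factors disappear from the intermediate bounds, and the refined constant $c'=10$ drops out.

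For the last assertion under the hypothesis $n\geq m^s+cn^a$, I would apply Lemma~\ref{lem:qi}(a). Every $d\in D_s$ satisfies $d<m^s$, hence $n-d>cn^a$, which exceeds $6$ for $n$ sufficiently large; plugging the bound just established into each $P_0(n-d,m)$ then gives $P_0(n-d,m)=O(d(m)m^{2s}/n^{3a})$ uniformly in $d\in D_s$. Summing over the at most $d(m)$ divisors in $D_s$ and dividing by $n$ yields $P_0(n,m)=O(d(m)^2 m^{2s}/n^{1+3a})$, and the estimate $d(m)^2\leq c_\delta^2 m^{2\delta}=O(m^{2\delta})$ completes the argument.

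The main obstacle is not a deep idea but careful bookkeeping: arranging the three crude per-term estimates so that the excess factors of $d(m)$ in the $\pee$ and $\peee$ pieces are converted uniformly into factors of $m^s$ via $d(m)\leq c_\delta m^\delta$, and the constants $c_\delta$, $\delta$, $s$ are juggled consistently so that a single expression $d(m)m^{2s}$ (with the announced coefficient $1+3c_\delta+c_\delta^2$) emerges from all three contributions before the final multiplication by $1/n$ in the recursive step.
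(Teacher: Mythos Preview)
Your proposal is correct and follows essentially the same route as the paper: decompose $P_0(n,m)$ via (\ref{eq-pi}), bound each $P_0^{(i)}(n,m)$ crudely from Lemma~\ref{lem:theps} using $P_0(\cdot,m)\le 1$ and $d<m^s$, combine with $d(m)\le c_\delta m^\delta$ to obtain the constant $1+3c_\delta+c_\delta^2$, and then feed the resulting bound back through Lemma~\ref{lem:qi}(a) for the refined estimate under $n\ge m^s+cn^a$. The only cosmetic difference is that for the constant $c'=10$ the paper phrases the simplification as $c_\delta m^{\delta-s}\le 1$ (so the bracket $1+3c_\delta m^{\delta-s}+(c_\delta m^{\delta-s})^2$ is at most $5$), which is equivalent to your observation that $d(m)\le m^s$ in this regime.
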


\begin{proof}
First assume only that $m\geq n\geq3$.  Let $D_s$, and  
$P_0^{(i)}(n,m)$, for $i = 1, 2, 3$, be as in Notation~\ref{notation}. By 
(\ref{eq-pi}), $P_0(n,m)$ is the sum of the  $P_0^{(i)}(n,m)$. 
We first estimate $\pe(n,m).$
By Lemma~\ref{lem:theps}~(a), and using the fact that $d<m^s$ for 
all $d\in D_s$, 
\[\pe(n,m) \le\frac{(n-3)!}{n!} 
\sum_{\stackrel{d \in D_s}{d\ge 3}}{(d-1)(d-2)}< 
\frac{d(m) m^{2s}}{n(n-1)(n-2)}.
\]
Similarly, by Lemma~\ref{lem:theps}~(b), 
\begin{eqnarray*}
\pee(n,m) & <  & \frac{3(n-3)!}{n!}\sum_{d_1, d_2 \in D_s} (d_2-1) 
\le   \frac{3d(m)^2m^{s}}{n(n-1)(n-2)}
\end{eqnarray*}
and by Lemma~\ref{lem:theps}~(c), 
\begin{eqnarray*}
\peee(n,m) &<& \frac{(n-3)!}{n!} \sum_{d_1,d_2,d_3\in D_s} 1
\le  \frac{d(m)^3}{n(n-1)(n-2)}.\\
\end{eqnarray*}

Thus, using the fact noted in 
Notation~\ref{notation} that $d(m) \le  c_\delta m^\delta$,
\begin{eqnarray*}
P_0(n,m) & \le  & 
\frac{d(m) \left( m^{2s} +3d(m)m^{s} + d(m)^2\right)
}{n(n-1)(n-2)} \\
&\le&\frac{d(m)m^{2s}\left( 1 +3c_\delta m^{\delta-s} + (c_\delta m^{\delta-s})^2\right)}{
n(n-1)(n-2)}< \frac{c'd(m) m^{2s}}{n^3}.
\end{eqnarray*}
To estimate $c'$ note first that, for $n\geq6$, $n(n-1)(n-2)> n^3/2$. Thus
if $n\geq6$ then, for any $m\geq n$ we may take $c'= 2(1 + 3c_\delta + c_\delta^2).$
If  $m\geq c_\delta^{1/(s-\delta)}$, then $c_\delta m^{\delta-s}\leq 1$ and so we may take $c'=10$.
Theorem~\ref{lem:props} now follows since $d(m)=m^{o(1)}$. 
Now assume that $n\geq m^s+cn^a$ for some positive constants $c$ and $a$.  
By Lemma~\ref{lem:qi},
\[
P_0(n,m)= \frac{1}{n}\sum_{d\in D_s}P_0(n-d, m).
\]
For each $d\in D_s$ we have $m>n-d\geq n-m^s\geq cn^a$, and hence applying 
Theorem~\ref{lem:props} (which we have just proved),  
\[P_0(n-d,m) < \frac{c'd(m)m^{2s}}{(n-d)^3}
\leq \frac{c'd(m) m^{2s}}{c^3 n^{3a}}.
\]
Thus, $P_0(n,m) \leq  \frac{d(m)}{n} \left(\frac{c'd(m)m^{2s}}{c^3n^{3a}}
\right)\le \frac{c'c_\delta^2m^{2s + 2\delta}}{c^3n^{1+3a}}$.
\end{proof}

\section{Proof of Theorem~\ref{leadingterms}}\label{sec:stheo}

First we determine the `very large' divisors of $m$ that are at most $n$.

\begin{lemma}\label{lem:divat}
Let $r, m$ and $n$  be positive integers such that
$rn\le m < (r+1)n$. 
\begin{enumerate}
\item[(a)] If  $d$ is a divisor of $m$ such that
$d \le n$, then one of the following holds:
\begin{enumerate}
\item[(i)] $d=n = \frac{m}{r}$,
\item[(ii)] $d = \frac{m}{r+1}$ so that $\frac{r}{r+1}n \le d < n$,
\item[(iii)] $d \le \frac{m}{r+2}<\frac{r+1}{r+2}n$.
\end{enumerate}
\item[(b)]
Moreover, if $d_1, d_2$ are divisors of
$m$ for which 
\[
d_1\le d_2 \le   \frac{m}{r+1}\quad  \mbox{and}\quad 
n \ge d_1 + d_2 > \frac{m(2r+3)}{2(r+1)(r+2)},
\] 
then $d_1=\frac{m}{c_1}, d_2=
\frac{m}{c_2}$, where $c_1, c_2$ divide $m$, and satisfy
$c_2 \le 2r+3$, and either $r+2\leq c_2 \le c_1 < 2(r+1)(r+2)$, 
or $c_2=r+1$, $c_1\geq r(r+1)$.
\end{enumerate}

\end{lemma}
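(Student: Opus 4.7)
The plan is to parameterize each divisor $d$ of $m$ with $d\leq n$ by its cofactor $c=m/d$, which is again a positive integer dividing $m$. For part~(a), the condition $d\leq n$ combined with $m\geq rn$ forces $c=m/d\geq m/n\geq r$, so I would split on the three cases $c=r$, $c=r+1$, and $c\geq r+2$. If $c=r$, then $d=m/r\geq n$, and combined with $d\leq n$ this forces $d=n$ and $m=rn$, giving case~(i). If $c=r+1$, then $d=m/(r+1)$, and the hypothesis $rn\leq m<(r+1)n$ yields $rn/(r+1)\leq d<n$, which is case~(ii). If $c\geq r+2$, then $d\leq m/(r+2)<(r+1)n/(r+2)$, giving case~(iii).

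For part~(b), set $d_i=m/c_i$ for $i=1,2$, where each $c_i$ is a positive integer divisor of $m$. The assumption $d_1\leq d_2\leq m/(r+1)$ forces $c_1\geq c_2\geq r+1$, and the two hypotheses on $d_1+d_2$, after dividing through by $m$, become the pair of reciprocal inequalities
\[
\frac{2r+3}{2(r+1)(r+2)}<\frac{1}{c_1}+\frac{1}{c_2}\leq \frac{n}{m}\leq \frac{1}{r},
\]
using $m\geq rn$ for the final step. The key arithmetic observation is that the lower bound equals $\frac{1}{2}\bigl(\frac{1}{r+1}+\frac{1}{r+2}\bigr)$, so the separating value $c_2=r+2$ is built into the hypothesis.

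To extract the upper bound on $c_2$, I would use $c_2\leq c_1$ so that $2/c_2\geq 1/c_1+1/c_2>(2r+3)/(2(r+1)(r+2))$; the identity $4(r+1)(r+2)=(2r+3)^2-1$ then yields $c_2<2r+3$, hence $c_2\leq 2r+3$. Now split on whether $c_2=r+1$ or $c_2\geq r+2$. In the first case, subtracting $1/(r+1)$ from the upper reciprocal inequality gives $1/c_1\leq 1/r-1/(r+1)=1/(r(r+1))$, so $c_1\geq r(r+1)$. In the second case, $1/c_2\leq 1/(r+2)$, and subtracting this from the lower reciprocal inequality gives
\[
\frac{1}{c_1}>\frac{2r+3}{2(r+1)(r+2)}-\frac{1}{r+2}=\frac{1}{2(r+1)(r+2)},
\]
so $c_1<2(r+1)(r+2)$, completing the dichotomy.

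I do not anticipate any serious obstacle: the lemma is a routine exercise in manipulating reciprocal inequalities on the cofactors $c_i=m/d_i$. The only insight required is recognizing that the somewhat opaque constant $m(2r+3)/(2(r+1)(r+2))$ in the hypothesis is engineered precisely so that the midpoint threshold between $c_2=r+1$ and $c_2\geq r+2$ falls exactly at the cut point of the conclusion; once that is seen, both alternatives reduce to a single subtraction of reciprocals.
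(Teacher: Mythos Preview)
Your proposal is correct and follows essentially the same argument as the paper: both parametrize divisors by their cofactors $c=m/d$, derive the reciprocal inequalities $\frac{2r+3}{2(r+1)(r+2)}<\frac{1}{c_1}+\frac{1}{c_2}\leq\frac{1}{r}$, bound $c_2$ via $\frac{2}{c_2}\geq\frac{1}{c_1}+\frac{1}{c_2}$, and then split on $c_2=r+1$ versus $c_2\geq r+2$ with the same subtractions. Your use of the identity $4(r+1)(r+2)=(2r+3)^2-1$ in fact gives the slightly sharper bound $c_2<2r+3$ (hence $c_2\leq 2r+2$), whereas the paper argues by contradiction from $c_2\geq 2r+4$ to obtain only $c_2\leq 2r+3$; either suffices for the stated conclusion.
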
 

\begin{proof}  
As $d$ is a divisor of $m$ there is a positive integer $t$ such 
that $d = \frac{m}{t}$. Now
$\frac{m}{t} \le n \le \frac{m}{r}$ and therefore $r \le t.$
If $r = t$ then $r$ divides $m$ and
$d = \frac{m}{r} \le n$,  and since also $rn \le m$ it follows that
$d = \frac{m}{r}=n$ and 
(i) holds. If $t \ge r+2$ then (iii) holds. Finally, if $t=r+1$, then
$d = \frac{m}{r+1}$ and $\frac{r}{r+1}n \le \frac{m}{r+1} < n$ and
hence (ii) holds.

Now we prove the last assertion. Suppose that $d_1, d_2$ are divisors of $m$ 
which are at most $ \frac{m}{r+1}$, and such that 
$d_1\leq d_2$ and $n\geq d_1 + d_2 > \frac{m(2r+3)}{2(r+1)(r+2)}$. Then, as $d_1,
d_2$ divide $m$, there are integers $c_1, c_2$
such that 
$d_1 = m/c_1$ and $d_2 = m/c_2.$
Since $d_i \le  m/(r+1)$ we have
$c_i \ge r+1$ 
for $i = 1,2$, and since $d_1\le d_2$ we have $c_1\ge c_2$.
Now $m/r \ge n \ge d_1 + d_2 >  \frac{m(2r+3)}{2(r+1)(r+2)}$, and hence
$1/r \ge 1/c_1 + 1/c_2 >  \frac{2r+3}{2(r+1)(r+2)}$. 
If  $c_2 \ge 2(r+2)$ then, as $c_1\ge c_2$, we would have 
$1/c_1 + 1/c_2 \le 1/(r+2)$, 
which is not the case.  Thus  $r+1 \le c_2 \le 2r+3.$ 
If $c_2\geq r+2$, then
%As $1/c_1 + 1/c_2 > \frac{1}{r+2}$ we have 
\[
\frac{1}{c_1}>  \frac{2r+3}{2(r+1)(r+2)} - \frac{1}{c_2} \ge 
 \frac{2r+3}{2(r+1)(r+2)} - \frac{1}{r+2} = 
 \frac{1}{2(r+1)(r+2)}\]  and hence
$c_1 < 2(r+1)(r+2)$ as in the statement. 
On the other hand, if $c_2=r+1$, then 
\[
\frac{1}{c_1}\leq \frac{n}{m}-\frac{1}{c_2}\leq \frac{1}{r}-\frac{1}{r+1}=\frac{1}{r(r+1)}
\]
so $c_1\geq r(r+1)$.
\end{proof}

The next result gives our first estimate of an 
upper bound for the proportion $P(n,m)$ of elements in $S_n$ of order
dividing $m$.  Recall our observation that the parameter $\delta$ in
Notation~\ref{notation}(c) can be any positive real number; in
Proposition~\ref{prop:general} we will restrict to $\delta \le s-\frac{1}{2}.$
Note that the requirement $rn\leq m<(r+1)n$ implies that 
$\frac{n}{r+1}\leq n-\frac{m}{r+1}\leq \frac{m}{r(r+1)}$; the first case of 
Definition~\ref{def:kr}~(b) below requires an upper bound of approximately 
half this quantity.

\begin{definition}\label{def:kr}
Let  $r,\, m,\, n$ be positive integers such that
$rn\le m <  (r+1)n$. Let $1/2<s\leq 3/4$ and $0<\de\leq s-\frac{1}{2}$.
\begin{itemize}
\item[(a)] Let $\alpha = \begin{cases}  1 & \mbox{if\ } m=rn,\\
                                        0 & \mbox{otherwise.}
			 \end{cases}$

\item[(b)] Let $\alpha' = \begin{cases}  1 & \mbox{if\ } (r+1) \mbox{\
 divides\ } m \ 
 \mbox{and\ }n-\frac{m}{r+1}<\frac{m}{2(r+1)(r+2)-1}, \\
                                        0 & \mbox{otherwise.}
			 \end{cases}$
\item[(c)] Let $t(r,m,n)$ denote  the number of divisors $d$ of $m$
  with $\frac{m}{2r+3} \leq d\leq\frac{m}{r+1}$ such that there exists
  a divisor $d_0$ of $m$ satisfying
\begin{itemize}
\item[(i)] $d+d_0\leq n$ and
\item[(ii)] $\frac{m}{2(r+1)(r+2)}< d_0\leq d$.
\end{itemize}
\item[(d)] Let $k(r,m,n)=t(r,m,n)\frac{2(r+1)(r+2)(2r+3)}{r^2}.$ 
\end{itemize}
\end{definition}

\begin{proposition}\label{prop:general}
Let  $r,\, m,\, n, s$ and $\delta$ be as in Definition~{\rm\ref{def:kr}}.
Then, for a fixed value of $r$ and sufficiently large $n$, 
\[ 
P(n,m) \le \frac{\alpha}{n}+\frac{\alpha'.(r+1)}{m}+\frac{k(r,m,n)}{n^2}+
O\left(\frac{1}{n^{1+2s-2\de}}
\right),
\]
where $\alpha, \alpha', t(r, m, n)$ and $k(r, m, n)$ are as in
Definition~$\ref{def:kr}.$  
Moreover,   $t(r,m,n) \le r+3$ and $k(r,m,n) \le 
  \frac{4(r+3)^4}{r^2} $.
\end{proposition}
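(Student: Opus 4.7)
The plan is to bound each summand in the decomposition $P(n,m) = P_0 + P_1 + P_2 + P_3 + \pf$ coming from equation~(\ref{eq-qi}), using Lemma~\ref{newPs} for $P_0$, Lemma~\ref{lem:qi} for the recursive expressions of each $P_i$, and Lemma~\ref{lem:divat} to control which divisors of $m$ can appear. Writing $d = m/c$ whenever $d \mid m$, so that $1/d = c/m \le c/(rn)$, the relevant sums reduce to sums of integers $c$ with controlled size and divisibility.

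The $P_0$ contribution is handled immediately by Lemma~\ref{newPs}: since $m = O(n)$ and $s < 1$, for large $n$ one has $n \ge m^s + n/2$, so taking $a = 1$ yields $P_0(n,m) = O(m^{2s+2\delta}/n^4) = O(n^{2s+2\delta-4})$, and the hypothesis $s \le 3/4$ ensures this lies in $O(1/n^{1+2s-2\delta})$. For $P_1$, inserting the divisor classification of Lemma~\ref{lem:divat}(a) into Lemma~\ref{lem:qi}(b) splits the sum into three pieces: the divisor $d = n$ arises only when $m = rn$ and contributes exactly $\alpha/n$; the divisor $d = m/(r+1)$ arises only when $(r+1)\mid m$ and contributes $\tfrac{r+1}{m}P_0(n-\tfrac{m}{r+1}, m)$, which matches $\alpha'(r+1)/m$ after bounding $P_0 \le 1$ in the $\alpha'=1$ regime and applying the refined $P_0$ bound (valid because $n-m/(r+1) = \Theta(n)$) in the $\alpha'=0$, $(r+1)\mid m$ regime; and the remaining divisors $d \le m/(r+2)$ leave $n - d \ge n/(r+2) = \Theta(n)$, so the refined $P_0$ bound together with $\sum 1/d \le d(m)/m^s$ absorbs this contribution into the error.

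For $P_2$ I use Lemma~\ref{lem:qi}(c), restricting to unordered pairs $d_1 \le d_2$ with a factor-of-2 adjustment, and split by whether $d_1 + d_2$ exceeds the threshold $m(2r+3)/(2(r+1)(r+2))$. Below the threshold, $n - d_1 - d_2 = \Theta(n)$, and the refined $P_0$ bound combined with $\sum 1/(d_1 d_2) \le (d(m)/m^s)^2$ produces an error-sized contribution. Above the threshold, Lemma~\ref{lem:divat}(b) applies; in the main regime $r+2 \le c_2 \le c_1 < 2(r+1)(r+2)$ the inequality $c_2 \le 2r+3$ gives $c_1 c_2 < 2(r+1)(r+2)(2r+3)$, so $1/(d_1 d_2) < 2(r+1)(r+2)(2r+3)/(r^2 n^2)$, and each such $d_2$ is counted by $t(r,m,n)$ (with $d_1$ playing the role of $d_0$), so bounding $P_0 \le 1$ and accumulating yields the main term $k(r,m,n)/n^2$. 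The complementary regime $c_2 = r+1$, $c_1 \ge r(r+1)$ is sub-cased by the size of $d_1$ relative to $n - m/(r+1)$: for small $d_1$ the refined $P_0$ bound dominates, while for large $d_1$ the bounded number of possible $c_1$ values produces either terms captured by $\alpha'(r+1)/m$ and $k(r,m,n)/n^2$ or terms in the error. For $P_3$ and $\pf$, I use Lemma~\ref{lem:qi}(d)--(e): writing $d_i = m/c_i$ with $c_i \ge r+1$ and $\sum 1/c_i \le 1/r$, tuples with strict inequality and $n - \sum d_i = \Theta(n)$ are controlled by the refined $P_0$ bound, while tuples with $\sum 1/c_i = 1/r$ exactly correspond to finitely many Egyptian-fraction decompositions (depending only on $r$), each contributing $O(1/n^3)$, which lies in the error.

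Finally, the bounds $t(r,m,n) \le r+3$ and $k(r,m,n) \le 4(r+3)^4/r^2$ follow by direct calculation: the $t$-counted divisors have the form $m/c$ with $c \in \{r+1, \ldots, 2r+3\}$, giving at most $r+3$ choices, and the inequality $(r+1)(r+2)(2r+3) \le 2(r+3)^3$ yields the $k$-bound. The main obstacle is the delicate analysis of $P_2$ in the above-threshold regime when $(r+1)\mid m$, where the ``boundary'' pairs containing $d_2 = m/(r+1)$ must be carefully matched against the $\alpha'(r+1)/m$ and $k(r,m,n)/n^2$ terms depending on the precise relationship between $d_1$, $d_2$, and $n$; a secondary difficulty is verifying that the near-extremal tuples in $P_3$ and $\pf$ (those with $\sum d_i$ close to but not equal to $n$) also fit into the error term.
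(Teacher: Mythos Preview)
Your overall architecture matches the paper's: decompose $P(n,m)$ via (\ref{eq-qi}), bound $P_0$ by Lemma~\ref{newPs}, and use Lemma~\ref{lem:qi} together with Lemma~\ref{lem:divat} for the remaining pieces. The treatments of $P_0$ and of the ``small'' divisors in $P_1$ are essentially correct. However, your handling of $P_2$ in the above-threshold regime contains a genuine gap that prevents the argument from yielding the stated constant $k(r,m,n)$.

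In the main regime $r+2\le c_2\le c_1<2(r+1)(r+2)$ you bound each pair by $1/(d_1d_2)<2(r+1)(r+2)(2r+3)/(r^2n^2)$ and then assert that ``each such $d_2$ is counted by $t(r,m,n)$ \dots\ accumulating yields the main term $k(r,m,n)/n^2$.'' But $t(r,m,n)$ counts the number of \emph{divisors} $d$ (your $d_2$) for which at least one admissible $d_0$ (your $d_1$) exists; it does not count pairs. For a fixed $d=m/c$ there may be many admissible $d_0=m/c_0$ with $c\le c_0<2(r+1)(r+2)$, so simply bounding $P_0\le 1$ for every such pair gives a sum of order $(r+3)\cdot 2(r+1)(r+2)$ terms, not $t(r,m,n)$ terms, and hence a constant larger than $k(r,m,n)$. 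The paper resolves this by selecting, for each $d$, only the \emph{largest} admissible $d_0=d_0(d)$ and bounding that single pair crudely; for every other $d_0<d_0(d)$ one has
\[
n-d-d_0\ \ge\ d_0(d)-d_0\ =\ \frac{m}{c_0}-\frac{m}{c_0'}\ \ge\ \frac{m}{c_0(c_0+1)}\ >\ \frac{m}{4(r+1)^2(r+2)^2},
\]
so $n-d-d_0=\Theta(n)$ and Lemma~\ref{newPs} pushes those pairs into the $O(n^{-(5-2s-\delta)})$ error. This ``largest $d_0$'' device is the missing idea.

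A related issue arises in your treatment of the $c_2=r+1$ sub-case together with the $d=m/(r+1)$ term in $P_1$. You bound the $P_1$ contribution by $\alpha'(r+1)/m$ via $P_0\le 1$, and then say that large-$d_1$ pairs with $c_2=r+1$ in $P_2$ are ``captured by $\alpha'(r+1)/m$'' as well. Bounding these two contributions separately does not give a single $(r+1)/m$; the paper instead observes that the $P_1$ term from $d=m/(r+1)$ \emph{plus} the $P_2$ contribution $P_2'(n,m)$ from pairs containing $m/(r+1)$ together count (a subset of) permutations containing an $\tfrac{m}{r+1}$-cycle, and this proportion is exactly $(r+1)/m$. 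Without this combination the bound you obtain for $\alpha'=1$ is too large by a factor close to~$2$.

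Finally, your $P_3$ argument via Egyptian fractions is not quite right as stated: the constraint is $\sum 1/c_i\le n/m$, not $\le 1/r$, and equality $\sum 1/c_i=1/r$ forces $\sum d_i=n$ only when $m=rn$. The paper sidesteps this entirely by splitting on whether $\max_i d_i>m/(4(r+1))$: if not, then $\sum d_i<3n/4$ and Lemma~\ref{newPs} applies; if so, the largest $d_i$ has $O(1)$ possible values and the remaining factor $\sum 1/(d_jd_k)\le d(m)^2/m^{2s}$ gives $O(n^{-(1+2s-2\delta)})$ directly. This is both simpler and avoids the ``near-extremal'' difficulty you flag at the end.
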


\begin{remark}\label{rem:general}{\rm 
(a) The term $\frac{1}{n}$, which occurs if and only if $m=rn$, corresponds to 
the $n$-cycles in $S_n$, and is the exact proportion of these elements.
We  refine the estimate for $P(n,rn)$ in Theorem~\ref{rn} below.

(b) The term $\frac{r+1}{m}$, which occurs only if $r+1$ divides $m$ and 
$n-\frac{m}{r+1}<\frac{m}{2(r+1)(r+2)}$, corresponds to permutations with order 
dividing $m$ and having either one or two 
$s$-large cycles, with one (the larger in the case of two cycles) of length $\frac{m}{r+1}$. 
The proportion 
of elements of $S_n$ containing a cycle of length $\frac{m}{r+1}$ 
is $\frac{r+1}{m}$, and if there exists a positive integer 
$d\leq n-\frac{m}{r+1}$ such that $d$ does not divide $m$, 
then some of these elements have a $d$-cycle and hence
do not have order dividing $m$. Thus $\frac{r+1}{m}$ may be 
an over-estimate for the proportion of elements in $S_n$ (where 
$n-\frac{m}{r+1}<\frac{m}{2(r+1)(r+2)}$) having order dividing $m$, having 
exactly one $s$-large cycle of length $\frac{m}{r+1}$, and possibly one additional 
$s$-large cycle of length dividing $m$.
However it is difficult to make a more precise estimate 
for this term that holds for all sufficiently large $m,n$. In  
Theorem~\ref{rn} we treat some special cases
where this term either does not arise, or can be determined precisely.

(c)   The   term  $\frac{k(r,m,n)}{n^2}$   arises   as  follows   from
permutations  that  have  exactly  two  $s$-large  cycles  of  lengths
dividing $m$.  For each of  the $t(r,m,n)$ divisors  $d$ of $m$  as in
Definition~\ref{def:kr}(c), let  $d_0(d)$ be  the largest of  the divisors
$d_0$   satisfying   Definition~\ref{def:kr}(c)(i),(ii).   Note that  $d_0(d)$
depends  on  $d$.  Then  $k(r,m,n)/n^2$  is an  upper  bound  for  the
proportion  of  permutations of  order  dividing  $m$  and having  two
$s$-large cycles of lengths $d$ and  $d_0(d)$, for some $d$ satisfying 
 $\frac{m}{2r+3} \leq d\leq\frac{m}{r+1}$.  As  in (b) this term may  be 
an over-estimate, not  only for the
reason  given there,  but  also  because lower  bounds  for the  cycle
lengths  $d,  d_0(d)$ were  used  to  define  $k(r,m,n)$. Indeed in the case
$m=rn$ we are able to obtain the exact value of the coefficient of the
$\frac{1}{n^2}$ summand.  }
\end{remark}

\begin{proof} 
We  divide  the   estimation  of  $P(n,m)$  into  five  subcases.  Recall that,
 by (\ref{eq-qi}), $P(n,m)$ is the sum of  $\pf(n,m)$ and the $P_i(n,m)$, for 
$i=0,1,2,3$, where these are as defined in Notation~\ref{notation}.
We will use the recursive
formulae for $\pf(n,m)$ and the $P_i(n,m)$ in Lemma~\ref{lem:qi}, together 
with the expressions for $P_0(n,m)$ in  Theorem~\ref{lem:props} and 
Lemma~\ref{newPs}, to estimate these five quantities. Summing these estimates 
will give, by (\ref{eq-qi}), our estimate for $P(n,m)$.
We also use the information about divisors of $m$ in
Lemma~\ref{lem:divat}.

First we deal with $P_0(n,m)$. Since $r$ is fixed, it follows that, for 
sufficiently large $n$ (and hence sufficiently large $m$), we  have $m^s
\leq \frac{m}{r+2}$, which is less than $\frac{(r+1)n}{r+2}=n-\frac{n}{r+2}$. 
Thus $n>m^s+\frac{n}{r+2}$,  and applying Lemma~\ref{newPs} with $a=1, c=\frac{1}{r+2}$, it follows that
\[
P_0(n,m)=O\left(\frac{m^{2s+2\de}}{n^4}\right)=O\left(\frac{1}{n^{4-2s-
2\de}}\right)\leq O\left(\frac{1}{n^{1+2s-2\de}}\right)
\]
since $4-2s-2\de\geq 1+2s-2\de$ when $s\leq 3/4$.

Next we estimate $P_3(n,m)$ and $\pf(n,m)$. 
By  Lemma~\ref{lem:qi}, the latter satisfies
$\pf(n,m)\leq \frac{1}{24}\sum\frac{1}{d_1d_2d_3d_4}$, where 
the summation is over all ordered 4-tuples of $s$-large divisors of $m$ 
whose sum 
is at most $n$. Thus  $\pf(n,m)\leq \frac{1}{24}\,\frac{d(m)^4}{m^{4s}}=
O\left(\frac{1}{n^{4s-4\de}}\right)$. Also 
\[
P_3(n,m)= \frac{1}{6}\sum
\frac{1}{d_1d_2d_3}P_0(n-d_1-d_2-d_3,m),
\] 
where 
the summation is over all ordered triples of $s$-large divisors of $m$ whose 
sum 
is at most $n$. For such a triple $(d_1,d_2,d_3)$, if each $d_i\leq\frac{m}
{4(r+1)}$, then $n-\sum d_i\geq n-\frac{3m}{4(r+1)}>\frac{n}{4}$, and so by 
Lemma~\ref{newPs}, $P_0(n-\sum d_i,m)=O\left(\frac{m^{2s+\de}}{n^{3}}
\right)$. Thus the contribution of triples of this type to $P_3(n,m)$ is 
at most $O\left(\frac{d(m)^3m^{2s+\de}}{m^{3s}n^3}
\right)=O\left(\frac{1}{n^{3+s-4\de}}\right)$. For each of the
remaining triples, the   
maximum $d_i$ is greater than $\frac{m}{4(r+1)}$ and in particular there is a 
bounded number of choices for the maximum $d_i$. Thus the contribution of the 
remaining triples  to $P_3(n,m)$ is at most $O\left(\frac{d(m)^2}{m^{1+2s}}
\right)=O\left(\frac{1}{n^{1+2s-2\de}}\right)$. It follows that 
\[
P_3(n,m)+\pf(n,m)=O\left(\frac{1}{n^{x_3}}\right),
\] 
where $x_3=\min\{4s-4\de,3+s-4\de,1+2s-2\de
\}=1+2s-2\de$ (using the fact that $\de\leq s-\frac{1}{2}\leq \frac{1}{4}$). 

Now we estimate $P_2(n,m)$. By Lemma~\ref{lem:qi}, 
\[
P_{2}(n,m)= \frac{1}{2}\sum
\frac{1}{d_1d_2}P_0(n-d_1-d_2,m),
\] 
where 
the summation is over all ordered pairs of $s$-large divisors of $m$ whose sum 
is at most $n$. We divide these pairs $(d_1,d_2)$ into two subsets.  The first 
subset consists of those for which $n- d_1-d_2\geq n^\nu$, where
$\nu=(1+2s+\de)/3$. Note that $\nu<1$ since $\nu\leq s -\frac{1}{6}<1$ (because 
$\de\leq s-\frac{1}{2}$ and $s\leq \frac{3}{4}$).
For a pair  $(d_1,d_2)$ such that $n- d_1-d_2\geq n^\nu$, by 
Lemma~\ref{newPs}, $P_0(n-d_1-d_2,m)=O\left(\frac{m^{2s+\de}}{n^{3\nu}}
\right)$. 
Thus the total contribution  to $P_{2}(n,m)$ from pairs of this type is 
at most $O\left(\frac{d(m)^2m^{2s+\de}}{m^{2s}n^{3\nu}}
\right)=O\left(\frac{1}{n^{3\nu-3\de}}\right)=O\left(\frac{1}{n^{1+2s-2\de}}
\right)$. 

Now consider pairs  $(d_1,d_2)$ such that $n- d_1-d_2< n^\nu$. Since each
$d_i<n\leq m/r$, it follows that each $d_i\leq m/(r+1)$.
Since $\nu<1$, for sufficiently 
large $n$ (and hence sufficiently large $m$) we have $n^\nu\leq \left(\frac{m}{r}
\right)^\nu<\frac{m}{2(r+1)(r+2)}$. Thus, for each of the pairs $(d_1,d_2)$ 
 such that $n- d_1-d_2< n^\nu$, we have $d_1+d_2>n-n^\nu>\frac{m}{r+1}-
\frac{m}{2(r+1)(r+2)}=\frac{m(2r+3)}{2(r+1)(r+2)}$, and hence 
one of  $(d_1,d_2)$, $(d_2,d_1)$ (or both if $d_1=d_2$) satisfies the conditions of
Lemma~\ref{lem:divat}~(b). 
Thus, by Lemma~\ref{lem:divat}~(b), it follows that if $d_1 \le d_2$,
then either $(d_0,d):=(d_1, d_2)$ satisfies the conditions of
Definition~\ref{def:kr}(c), or $d_2=\frac{m}{r+1}$ and $d_1\leq
\frac{m}{2(r+1)(r+2)}$. Let $P_2'(n,m)$ denote the contribution to
$P_2(n,m)$ from all the pairs $(d_1,d_2)$ where $\{d_1,d_2\}=\{
\frac{m}{r+1},d_0\}$ and $d_0 \leq \frac{m}{2(r+1)(r+2)}$.

For the other pairs, we note that there are $t(r,m,n) \le r+3$
choices for the larger divisor $d$.
Consider a fixed $d\leq \frac{m}{r+1}$, say $d = \frac{m}{c}.$ Then each divisor $d_0$
of $m$, such that $\frac{m}{2(r+1)(r+2)} < d_0 \le d$ and $d + d_0 \le
n$, is equal to $\frac{m}{c_0}$ for some $c_0$ such that
$c \le c_0 < 2(r+1)(r+2)$. Let $d_0(d) = \frac{m}{c_0}$ be the largest of
these divisors $d_0.$ By 
Lemma~\ref{lem:divat}(b), the combined contribution to $P_2(n,m)$
from the ordered pairs $(d,d_0(d))$ and $(d_0(d),d)$ is (since $d$ and
$d_0(d)$ may be equal) at most 
$$
\frac{1}{dd_0(d)} < \frac{2r+3}{m} \cdot \frac{2(r+1)(r+2)}{m} =
\frac{2(r+1)(r+2)(2r+3)}{m^2}.
$$ 
(Note that $\frac{1}{dd_0(d)} \ge \frac{(r+1)^2}{m^2} > \frac{1}{n^2}$.) If 
$d_0=\frac{m}{c'}$ is any other divisor of this type and  
$d_0 < d_0(d)$, then $c_0+1 \le c' < 2(r+1)(r+2)$, and so 
$n-d-d_0=(n-d-d_0(d))+d_0(d)-d_0$ is at least  
$$
d_0(d)-d_0=\frac{m}{c_0} - \frac{m}{c'} \ge\frac{m}{c_0} - \frac{m}{c_0+1}=
\frac{m}{c_0(c_0+1)} > \frac{m}{4(r+1)^2(r+2)^2}.
$$
By Lemma~\ref{newPs}, the contribution to $P_2(n,m)$ from the pairs $(d,d_0)$ and
$(d_0,d)$ is  $O( \frac{1}{m^2}\cdot
\frac{m^{2s+\delta}}{m^3}) = O(\frac{1}{n^{5-2s-\delta}})$. Since there
are $t(r,m,n) \le r+3$ choices for $d$, and a bounded number of divisors $d_0$ 
for a given $d$,  the contribution to $P_2(n,m)$ from
all  the pairs  $(d_1,d_2)$ such that $n- d_1-d_2< n^\nu$
is at  most 
\[
P_2'(n,m) + t(r,m,n) \frac{2(r+1)(r+2)(2r+3)}{n^2r^2}+ O\left(\frac{1}{n^{5-2s-\de}}
\right).
\]
Thus
\begin{eqnarray*}
P_2(n,m)&\le& P_2'(n,m) + \frac{2t(r,m,n)(r+1)(r+2)(2r+3)}{n^2r^2}+
  O\left(\frac{1}{n^{x_2}}\right)  \\
&=&  P_2'(n,m) +\frac{k(r,m,n)}{n^2} + O\left(\frac{1}{n^{x_2}}\right)
\end{eqnarray*}
with $x_2=\min\{1+2s-2\de,5-2s-\de\}=1+2s-2\de$.
Note that
\[
k(r,m,n)\leq (r+3)
\frac{2(r+1)(r+2)(2r+3)}{r^2}=4r^2+30r+80+\frac{90}{r}+\frac{36}{r^2}
\]
which is less than $\frac{4(r+3)^4}{r^2}$. 

Finally we estimate $P_1(n,m)+P'_2(n,m)$. By Lemma~\ref{lem:qi}, $P_1(n,m)=
\sum \frac{1}{d}P_0(n-d,m)$, where the summation is over all $s$-large 
divisors $d$
of $m$ such that $d\leq n$, and we take $P_0(0,m)=1$. 
Note that $d\leq n\leq \frac{m}{r}$, so each divisor $d=\frac{m}{c}$ for some 
$c\geq r$. In the case where 
$m=rn$, that is, the case where $n$ divides $m$ (and only in this case), 
we have a contribution to $P_1(n,m)$ of $\frac{1}{n}$ due to $n$-cycles.
If $d<n$ then $d=\frac{m}{c}$ with $c\geq r+1$.

Next we consider all divisors $d$ of $m$ such that $d\leq \frac{m}{r+2}$. 
For each of these divisors, $n-d\geq n - \frac{m}{r+2}\ge n-\frac{(r+1)n}{r+2} 
=\frac{n}{r+2}$. Thus by Lemma~\ref{newPs}, $P_0(n-d,m)
=  O\left(\frac{m^{2s + \delta}}{n^{3}}\right)
= O\left(\frac{1}{n^{3-2s-\delta}}\right)$.
The number of $d$ satisfying $d\geq \frac{m}{2(r+1)}$ is bounded in terms of 
$r$ (which is fixed), and hence  the contribution to $P_1(n,m)$ 
from all the divisors $d$ satisfying $\frac{m}{2(r+1)}\leq d\leq  \frac{m}{r+2}$ 
is at most $O\left(\frac{1}{m}\,\frac{1}{n^{3-2s-\delta}}\right)=O\left(
\frac{1}{n^{4-2s-\delta}}\right)$. On the other hand, if $m^s\leq d
<\frac{m}{2(r+1)}$, then $n-d>n - \frac{(r+1)n}{2(r+1)} =\frac{n}{2}$. Now 
since $r$ is fixed and $s<1$, for sufficiently large $n$, we have $m^s<\frac{n}
{4}$, and so $n-d> m^s +\frac{n}{4}$. Then, by Lemma~\ref{newPs} (applied 
with $a=1$ and $c=\frac{1}{4}$), 
$P_0(n-d,m)=  O\left(\frac{m^{2s + 2\delta}}{(n-d)^{4}}\right)
= O\left(\frac{1}{n^{4-2s-2\delta}}\right)$, and the contribution to 
$P_1(n,m)$ from all $s$-large divisors $d<  \frac{m}{2(r+1)}$ is at most 
$\frac{d(m)}{m^s}O\left(\frac{1}{n^{4-2s-2\delta}}\right)=
O\left(\frac{1}{n^{4-s-3\delta}}\right)$. Thus, noting that $\min\{4-2s-\de,
4-s-3\de\}\geq 1+2s-2\de$, the contribution to $P_1(n,m)$ from all 
$s$-large divisors $d$ of $m$ such that $d\leq\frac{m}{r+2}$ is 
$O\left(\frac{1}{n^{1+2s-2\delta}}\right)$.

By  Lemma~\ref{lem:divat},  the only  divisor  not  yet considered  is
$d=\frac{m}  {r+1}$ and  this case  of course  arises only  when $r+1$
divides  $m$. Suppose then that $r+1$ divides $m$. We must estimate 
the contribution to $P_1(n,m)+P'_2(n,m)$ from elements containing a 
cycle of length $d=\frac{m}{r+1}$. The  contribution  to  $P_1(n,m)+P'_2(n,m)$ 
due  to  the  divisor $d=\frac{m}{r+1}$    
is   $\frac{r+1}{m}P_0(n-\frac{m}{r+1},m)+\frac{r+1}{m}\sum_{d_0}\frac{1}{d_0} 
P_0(n-\frac{m}{r+1}-d_0,m)$, where the summation is over all $s$-large $d_0\leq 
\frac{m}{2(r+1)(r+2)}$. 
Suppose first that $n=\frac{m}{r+1}\geq \frac{m}{2(r+1)(r+2)-1}$, so that for each $d_0$, 
$n-\frac{m}{r+1}-d_0>\frac{m}{2(r+1)^2(r+2)^2}$.
Then, by Lemma~\ref{newPs}, the contribution to $P_1(n,m)+P'_2(n,m)$ is at most
\[
O\left(\frac{1}{m}.\frac{m^{2s+\de}}{m^{3}}\right)
+d(m) O\left(\frac{1}{m^{1+s}}.\frac{m^{2s+\de}}{m^{3}}\right)
=O\left(\frac{1}{n^{4-2s-\de}}\right)
\]
and this is $ O\left(\frac{1}{n^{1+2s-2\de}}\right)$
since $4-2s-\de\geq 1+2s-2\de$.  Finally suppose that
$n-\frac{m}{r+1}  < \frac{m}{2(r+1)(r+2)}$. In this case we  estimate the  contribution to
$P_1(n,m)+P'_2(n,m)$   from  $d=\frac{m}{r+1}$ by the proportion $\frac{1}{d}=\frac{r+1}{m}$
of elements of $S_n$ containing a $d$-cycle
(recognising  that this  is usually  an over-estimate).  Putting these
estimates together we have
\[
P_1(n,m)+P'_2(n,m)\leq\frac{\alpha}{n}+\frac{\alpha'.(r+1)}{m}+
O\left(\frac{1}{n^{1+2s-2\de}}\right),
\]
where $\alpha=1$ if $m=rn$ and is $0$ otherwise, and $\alpha'=1$ if
$r+1$ divides $m$ and $n-\frac{m}{r+1}<\frac{m}{2(r+1)(r+2)-1}$, and is 0
otherwise.  
The result now follows using (\ref{eq-qi}) and the estimates we have obtained
for each of the summands.
\end{proof}

It is sometimes useful to separate out the results of 
Proposition~\ref{prop:general} according to the values of $m,n$. We do this in 
the theorem below, and also obtain in parts (a) and (b) exact asymptotic 
expressions for $P(n,rn)$ and $P(n,t!(n-t))$ where $r, t$ are bounded and $n$ 
is sufficiently large. For this it is convenient to define two sets of integer pairs.

\begin{definition}\label{T}{\rm
For positive integers $r$ and $m$, define the following sets of integer pairs:
\[
\mathcal{T}(r)=\{(i,j)\,|\, 1\leq i,j\leq r^2, ij
=r^2,\ \mbox{and both}\ r+i, r+j\ \mbox{divide}\ m\}
\]
and $\mathcal{T}'(r)=\{(i,j)\,|\, 1< i,j\leq (r+1)^2, 
(i-1)(j-1) =(r+1)^2,$ and both $r+i, r+j\ \mbox{divide}\ m\}.
$
}
\end{definition}

\begin{theorem}\label{rn}
Let $n,m,r$ be positive integers such that $rn\leq m<(r+1)n$. Let 
$1/2<s\leq 3/4$ and $0<\de\leq s-1/2$. Then, the 
following hold for $r$ fixed and sufficiently large $n$ (where the sets 
$\mathcal{T}(r)$ and $\mathcal{T}'(r)$ are as in Definition~{\rm \ref{T}}).
\begin{enumerate}
\item[(a)] If $m=rn$,  then ${\displaystyle P(n,m)=\frac{1}{n}+\frac{c(r)}{n^2}
+O\left(\frac{1}{n^{1+2s-2\de}}\right)}$, where \\
${\displaystyle
c(r)=\sum_{(i,j)\in\mathcal{T}(r)}(1+\frac{i+j}{2r}).}
$
In particular $c(1)=0$ if $n$ is odd, and $2$ if $n$ is even.
\item[(b)] If $r=t!-1$ and $m=t!(n-t)=(r+1)n-t\cdot t!$, then \\
${\displaystyle
  P(n,m)=\frac{1}{n-t}+\frac{c'(r)}{(n-t)^2}+O\left(\frac{1}{n^{1+2s-2\de}} 
\right)},$ 
where \\
${\displaystyle c'(r)=\sum_{(i,j)\in\mathcal{T}'(r)}(1+\frac{i+j-2}{2(r+1)})}$.
\item[(c)] If $rn<m$, then 
${\displaystyle P(n,m)\leq \frac{\alpha'.(r+1)}{m}+\frac{k(r,m,n)}
{n^2}+ O\left(\frac{1}{n^{1+2s-2\de}}\right)}$, where $\alpha'$ and
$k(r,m,n)$ are as in 
Definition~{\rm \ref{def:kr}}.
\end{enumerate}
\end{theorem}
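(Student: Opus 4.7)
Part (c) is immediate from Proposition~\ref{prop:general}: when $rn<m$, one has $\alpha=0$, and the asserted bound is exactly the conclusion of that proposition. Parts (a) and (b) require exact asymptotics to order $1/n^2$, which I will obtain by refining the summation-by-summation analysis used to prove Proposition~\ref{prop:general}. Most of the work there is reusable: the estimates for $P_0$, $P_3$, $\pf$, together with the ``off-diagonal'' pieces of $P_1$ and $P_2$ (contributions from divisors $d\le m/(r+2)$ and from pairs with $n-d_1-d_2=\Theta(n)$) are all $O(1/n^{1+2s-2\de})$ and absorb into the error. What I need to do in (a) and (b) is to identify the finitely many ``diagonal'' contributions and evaluate them exactly.

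For part (a) with $m=rn$: by Lemma~\ref{lem:divat}(a) the only divisor $d\le n$ that contributes at order $1/n$ is $d=n$ itself, which gives exactly $1/n$ via $P_1$ (the proportion of $n$-cycles, using $P_0(0,m)=1$). For $P_2$ the relevant pairs, by Lemma~\ref{lem:divat}(b), are $(d_1,d_2)=(m/(r+a_1),m/(r+a_2))$ with bounded $a_1,a_2\ge 1$. A short computation gives
\[
n-d_1-d_2=\frac{m(a_1a_2-r^2)}{r(r+a_1)(r+a_2)},
\]
so the constraint $d_1+d_2\le n$ is equivalent to $a_1a_2\ge r^2$. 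When $a_1a_2=r^2$ the remainder is zero and $P_0(0,m)=1$; summing $\tfrac{1}{2d_1d_2}$ over ordered pairs $(i,j)=(a_1,a_2)\in\mathcal{T}(r)$ and using the identity $(r+i)(r+j)=2r^2+r(i+j)$ together with $m=rn$ gives exactly $c(r)/n^2$. When $a_1a_2>r^2$ the remainder is $\Theta(n)$, so Lemma~\ref{newPs} yields $P_0(n-d_1-d_2,m)=O(1/n^{3-2s-\de})$ and the finitely many such pairs contribute $O(1/n^{5-2s-\de})$, inside the error term. The claim about $c(1)$ is a direct check: $\mathcal{T}(1)=\{(1,1)\}$ if $2\mid n$ and is empty otherwise.

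Part (b) with $m=(r+1)(n-t)$ and $r+1=t!$ is parallel, with two adjustments. First, since every $g\in S_t$ has order dividing $t!$, which divides $m$, one has $P_0(t,m)=1$; consequently every $g\in S_n$ containing an $(n-t)$-cycle satisfies $g^m=1$, and the unique large divisor $d=m/(r+1)=n-t$ contributes exactly $1/(n-t)$ to $P_1$. (For large $n$ the subcase where some $c_i$ equals $r+1$ is excluded from $P_2$, because then $d_{3-i}\le t<m^s$.) Second, writing $c_k=r+1+b_k$ with $b_k\ge 1$, the analogous remainder computation shows that $n-d_1-d_2=t$ exactly when $b_1b_2=(r+1)^2$, and is $\Theta(n)$ otherwise; the matching pairs correspond bijectively to $(i,j)\in\mathcal{T}'(r)$ via $b_1=i-1$, $b_2=j-1$, and the identity $(r+i)(r+j)=(r+1)(2r+i+j)$ (a consequence of $(i-1)(j-1)=(r+1)^2$) yields the total $c'(r)/(n-t)^2$.

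The main obstacle will be the careful bookkeeping required to confirm that all pairs outside the finitely many matching ones lie in the error term. Pairs with unbounded $c_i$ or with $d_1+d_2$ far from $n$ (respectively $n-t$) are already handled by the arguments for Proposition~\ref{prop:general}; the new cases to worry about are the ``bounded-$c_i$ but $a_1a_2>r^2$'' (respectively ``$b_1b_2>(r+1)^2$'') pairs, and these are controlled by the explicit remainder-formula identity above. Once that identity is in place the separation is algebraic rather than estimate-driven, and the remaining computation of $c(r)$ and $c'(r)$ reduces to arithmetic with the divisor conditions.
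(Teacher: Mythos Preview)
Your proposal is correct and follows essentially the same route as the paper: part~(c) is read off from Proposition~\ref{prop:general}, and parts~(a) and~(b) are obtained by revisiting the $P_2$ analysis in that proof, using the explicit remainder identities $n-d_1-d_2=\frac{m(a_1a_2-r^2)}{r(r+a_1)(r+a_2)}$ and its analogue with $(r+1)^2$ to separate the exact pairs in $\mathcal{T}(r)$ (respectively $\mathcal{T}'(r)$) from those with $\Theta(n)$ remainder. Your observation in~(b) that $P_0(t,m)=1$ (since every cycle length $\le t$ divides $t!\mid m$ and is $s$-small for large $m$) is exactly what makes the $1/(n-t)$ and $c'(r)/(n-t)^2$ contributions exact, and your remark that a pair with $c_i=r+1$ forces the other part to be $\le t<m^s$ cleanly explains why such pairs belong to $P_1$ rather than $P_2$; the paper argues the same points in slightly different language.
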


\begin{proof} 
Part (c) follows immediately from Proposition~\ref{prop:general}.
Next we prove part (a). Suppose that $m=rn$. 
If $r+1$ divides $m$ then we have $n-\frac{m}{r+1}=
\frac{m}{r(r+1)}>\frac{m}{2(r+1)(r+2)-1}$. It follows from 
Proposition~\ref{prop:general} that $P(n,m)\leq\frac{1}{n}+\frac{k(r,m,n)}
{n^2}+O\left(\frac{1}{n^{1+2s-2\de}}\right)$. To complete the proof we refine 
the argument given in the proof of Proposition~\ref{prop:general} for 
$P_2(n,m)$ which gave rise to the term $\frac{k(r,m,n)}{n^2}$. The elements 
contributing to this term were those 
with exactly two $s$-large cycles, where one of these cycles had length 
$d=\frac{m}{r+i}$ for some $i$ such that $1\leq 
i\leq r+3$, and the other had length $d_0(d)=\frac{m}{r+j}$ for some $j$ such 
that $r+i\leq r+j <
2(r+1)(r+2)$ and $d + d_0(d) \le n.$ Moreover, for a given value of $d$,
the  value of $d_0(d)$  was the largest integer with these properties.
Since we now assume that 
$m=rn$ we have
\[
d+d_0(d)=\frac{m(2r+i+j)}{(r+i)(r+j)}\leq n=\frac{m}{r}
\]
that is, $r(2r+i+j)\leq(r+i)(r+j)$, which is equivalent to $r^2\leq ij$.
If $d+d_0(d)$ is strictly less than $n$, that is to say, if $r^2<ij$, and thus
$ij-r^2\geq1$, then
\[
n-d-d_0(d)=n-\frac{rn(2r+i+j)}{(r+i)(r+j)}=\frac{n(ij-r^2)}{(r+i)(r+j)}\geq
\frac{n}{(r+i)(r+j)},
\]
and since $i\leq r+3$ and $r+j<2(r+1)(r+2)$ we have $\frac{n}{(r+i)(r+j)}
\geq \frac{n}{2(r+1)(r+2)(2r+3)}$.
It now follows from Lemma~\ref{newPs} that the contribution to 
$P_2(n,m)$ from all ordered pairs 
$(d,d_0(d))$ and $(d_0(d),d)$ with $d,d_0(d)$ as above and $n>d+d_0(d)$ is $O\left( 
\frac{1}{n^2}\,\frac{m^{2s+\de}}{n^3}\right)=O\left(\frac{1}{n^{5-2s-\de}}
\right)\leq O\left(\frac{1}{n^{1+2s-2\de}}\right)$.
Thus when $m=rn$, the only contributions to the $O\left(\frac{1}{n^2}\right)$ 
term come from pairs
$(\frac{m}{r+i},\frac{m}{r+j})$ such that $r^2=ij$ and $1\leq i,j\leq 
r^2$. (Note that we no longer assume $i\leq j$.) 
These are precisely the pairs $(i,j)\in\mathcal{T}(r)$. For such a pair
$(\frac{m}{r+i},\frac{m}{r+j})$, the contribution to $P_2(n,m)$ is 
\[
\frac{1}{2}\cdot\frac{r+i}{m}\cdot\frac{r+j}{m}=
\frac{r^2+r(i+j)+ij}{2n^2r^2}=\frac{1}{n^2}(1+\frac{i+j}{2r})
\]
(since $ij=r^2$). Thus $P(n,m)\leq\frac{1}{n}+\frac{c(r)}{n^2}
+O\left(\frac{1}{n^{1+2s-2\de}}\right)$.  Moreover, for each 
$(i,j)\in\mathcal{T}(r)$, each permutation in $S_n$ having exactly two cycles 
of lengths $\frac{m}{r+i}$ and $\frac{m}{r+j}$ is a permutation of order 
dividing 
$m$. Thus $P(n,rn)\geq \frac{1}{n}+\frac{c(r)}{n^2}$, and the main assertion 
of part (a) is proved. Finally we note that, if $r=1$ then the only 
possible pair in $\mathcal{T}(1)$ is $(1,1)$, and for this pair to lie in the 
set we require that $r+1=2$ divides $m=n$. Thus $c(1)$ is 0 if $n$ is odd, 
and is 2 if $n$ is even.

Finally we prove part (b) where we have $r=t!-1$ and $m=t!(n-t)$. Then
$rn=(t!-1)n=m+t\cdot t!-n$ which is less than $m$ if $n>t\cdot t!$. Also
$(r+1)n=t!\,n>m$. Thus, for sufficiently large $n$, we have $rn<m<(r+1)n$.
Moreover, $r+1$ divides $m$ and $n-\frac{m}{r+1}=n-(n-t)=t$, which for 
sufficiently large $n$ is less 
than $\frac{n-t}{3t!}<\frac{m}{2(r+1)(r+2)-1}$. It now follows from part (c) 
that $P(n,t!(n-t))\leq \frac{1}{n-t}+\frac{k(r,m,n)}{n^2}+O\left(\frac{1}
{n^{1+2s-2\de}}\right)$. 
Our next task is to improve the coefficient of the $O(\frac{1}{n^2})$ term 
using a similar argument to the proof of part (a).
The elements 
contributing to this term have exactly two $s$-large cycles of lengths 
$d=\frac{m}{r+i}$ and $d_0(d)=\frac{m}{r+j}$, with $r+i,r+j\leq (r+1)(r+2)$ and
\[
d+d_0(d)=\frac{m(2r+i+j)}{(r+i)(r+j)}\leq n=\frac{m}{r+1}+t.
\]
This is equivalent to $(r+1)(2r+i+j)\leq(r+i)(r+j)+\frac{t(r+1)(r+i)(r+j)}{m}$,
and hence, for sufficiently large $n$ (and hence sufficiently large $m$),
$(r+1)(2r+i+j)\leq (r+i)(r+j)$. This is equivalent to $(i-1)(j-1)\geq (r+1)^2$.
If $(i-1)(j-1)> (r+1)^2$, then
\begin{eqnarray*}
n-d-d_0(d)&=&(t+\frac{m}{r+1}) - \frac{m(2r+i+j)}{(r+i)(r+j)}\\
&=&t+\frac{m((i-1)(j-1)-(r+1)^2)}{(r+1)(r+i)(r+j)}\\
&>&\frac{rn}{(r+1)^3(r+2)^2}.
\end{eqnarray*}
As for part (a), the contribution to 
$P_2(n,m)$ from all pairs 
$(\frac{m}{r+i},\frac{m}{r+j})$ with $(i-1)(j-1)> (r+1)^2$ is 
$O\left(\frac{1}{n^{1+2s-2\de}}\right)$.
Thus the only contributions to the $O\left(\frac{1}{n^2}\right)$ 
term come from pairs
$(d,d_0(d))=(\frac{m}{r+i},\frac{m}{r+j})$ such that $(r+1)^2=(i-1)(j-1)$ 
and $1\leq i,j\leq (r+1)^2$.  
These are precisely the pairs $(i,j)\in\mathcal{T}'(r)$. For each of 
these pairs we have $r^2+2r=ij-i-j$ and the contribution to $P_2(n,m)$ is 
\begin{eqnarray*}
\frac{1}{2dd_0(d)}&=&\frac{(r+i)(r+j)}{2m^2}=
\frac{r^2+r(i+j)+ij}{2(r+1)^2(n-t)^2}\\
&=&\frac{(r+1)(2r+i+j)}{2(r+1)^2(n-t)^2}=
\frac{1}{(n-t)^2}\left(1+\frac{i+j-2}{2(r+1)}\right).
\end{eqnarray*}
Thus $P(n,m)\leq\frac{1}{n-t}+\frac{c'(r)}{n^2}
+O\left(\frac{1}{n^{1+2s-2\de}}\right)$. On the 
other hand, each permutation in $S_n$ that contains an $(n-t)$-cycle has 
order dividing $t!(n-t)=m$, and the proportion of these elements is 
$\frac{1}{n-t}$. Also, for each 
$(i,j)\in\mathcal{T}'(r)$, each permutation in $S_n$ having exactly two cycles 
of lengths $\frac{m}{r+i}$ and $\frac{m}{r+j}$, and inducing any permutation 
on the remaining $n-\frac{m}{r+i}-\frac{m}{r+j}=t$ points, is a permutation 
of order dividing $m=t!(n-t)$, and the proportion of all such elements is 
$\frac{c'(r)}{(n-t)^2}$. Thus $P(n,m)\geq \frac{1}{n-t}+\frac{c'(r)}{(n-t)^2}$,
and the assertion 
of part (b) is proved.
\end{proof}

It is a simple matter now to prove Theorems~\ref{leadingterms} and 
\ref{bounds}.

%\bigskip
%\noindent
\begin{proof}[Proof of Theorems~{\rm\ref{leadingterms}} and {\rm\ref{bounds}}]
The first theorem follows from Theorem~\ref{rn}~(a) and (b) on 
setting $s=3/4$ and allowing $\delta \rightarrow 0$. Note that
$\frac{1}{n-t} = \frac{1}{n} + \frac{t}{n^2} + O(\frac{1}{n^3})$ and
$\frac{1}{(n-t)^2} = \frac{1}{n^2} +  O(\frac{1}{n^3})$.
For the second theorem, again we set  $s=3/4$ in
Theorem~\ref{rn}(c). By
Proposition~\ref{prop:general} we have $k(r,m,n) \le \frac{4(r+3)^4}{r^2}$. 
If we define $k(r) = \frac{4(r+3)^4}{r^2}$ the result follows.
\end{proof}

\bigskip
Finally we derive the conditional probabilities in Corollary~\ref{cdnlprobs1}.

%\bigskip
%\noindent
\begin{proof}[Proof of Corollary~\rm\ref{cdnlprobs1}]
Let  $r,\, n$ be positive integers with $r$ fixed and $n$
`sufficiently large',   and let $g$ be a uniformly
distributed random element of $S_n$. First set $m = rn.$ 
Let $A$ denote the event that $g$ is an  $n$-cycle, and let $B$ denote the 
event that $g$ has order dividing $m$, so that the probability $\Pr(B)$ 
is $P(n,m)$. Then, by elementary probability theory, we have
\begin{eqnarray*}
\Pr( A \mid B) &= &\frac{\Pr( A \cap B)} {\Pr(B)} = \frac{\Pr( A )}
{\Pr(B)}   
 =  \frac{\frac{1}{n}}{P(n,m)}. \\
\end{eqnarray*}
By Theorem~\ref{leadingterms}, $\frac{1}{n}+\frac{c(r)}{n^2}<P(n,m)=\frac{1}{n}+\frac{c(r)}{n^2}+O\left(\frac{1}
{n^{2.5-o(1)}}\right)$, and hence 
\begin{eqnarray*}
1-\frac{c(r)}{n}-O\left(\frac{1}
{n^{1.5-o(1)}}\right)&\leq& \Pr(A \mid B)
\leq 1-\frac{c(r)}{n}+O\left(\frac{1}
{n^{2}}\right).\\
\end{eqnarray*}

Now suppose that $r=t!-1$ for some integer $t\geq2$, and let $A$ denote the 
event that $g$ contains an $(n-t)$-cycle, so that $\Pr(A)=\frac{1}{n-t}$. 
Then, with $B$ as above for the integer $m:=t!(n-t)$, we have 
\begin{eqnarray*}
\Pr( A \mid B) &= &\frac{\Pr( A \cap B)} {\Pr(B)} = \frac{\Pr( A )}
{\Pr(B)}   
 =  \frac{\frac{1}{n-t}}{P(n,m)}. \\
\end{eqnarray*}
By Theorem~\ref{rn}(b),
$\frac{1}{n-t}+\frac{c'(r)}{(n-t)^2}<P(n,m)=\frac{1}{n-t}+
\frac{c'(r)}{(n-t)^2}+O\left(\frac{1} {n^{2.5-o(1)}}\right)$, and hence  
\begin{eqnarray*}
1-\frac{c'(r)}{n}-O\left(\frac{1}
{n^{1.5-o(1)}}\right)&\leq& \Pr(A \mid B)
\leq 1-\frac{c'(r)}{n}+O\left(\frac{1}
{n^{2}}\right).
\end{eqnarray*}
\end{proof}

\subsection*{This research was supported ARC Discovery Grants DP0209706 and
  DP0557587. The authors thank the referee for carefully reading
 the submitted version and advice on the paper.} 

{\footnotesize
\providecommand{\bysame}{\leavevmode\hbox to3em{\hrulefill}\thinspace}

}

\end{document}